\newcommand{\B}{\mathbf{B}}		% kula
\newcommand{\supp}{\operatorname{supp}}
\newcommand{\id}{\operatorname{id}}
\newcommand{\lip}{\operatorname{Lip}}
\renewcommand{\le}{\leqslant}
\renewcommand{\ge}{\geqslant}
\newcommand{\sC}{\operatorname{Cent}}
\newcommand{\sS}{\mathsf{S}}
\newcommand{\sJ}{\mathsf{J}}
\newcommand{\good}{\mathsf{Good}}
\newcommand{\bad}{\mathsf{Bad}}
\newcommand{\fin}{\mathsf{Fin}}
\newcommand{\R}{{\ensuremath{\mathbb R}}}
\newcommand{\dist}{\operatorname{dist}}
\newcommand{\dd}{\phantom{.}\mathrm{d}}
\definecolor{czerwony}{rgb}{0.9, 0.2, 0.1}
\def\Xint#1{\mathchoice
   {\XXint\displaystyle\textstyle{#1}}%
   {\XXint\textstyle\scriptstyle{#1}}%
   {\XXint\scriptstyle\scriptscriptstyle{#1}}%
   {\XXint\scriptscriptstyle\scriptscriptstyle{#1}}%
   \!\int}
\def\XXint#1#2#3{{\setbox0=\hbox{$#1{#2#3}{\int}$}
     \vcenter{\hbox{$#2#3$}}\kern-.5\wd0}}
\def\dashint{\Xint-}
\numberwithin{equation}{section}
\theoremstyle{definition}
\newtheorem{tw}{Theorem}[section]
\newtheorem{lem}[tw]{Lemma}
\newtheorem{stw}[tw]{Proposition}
\newtheorem{rem}[tw]{Remark}
\newtheorem{ex}[tw]{Example}
\newtheorem{cl}[tw]{Claim}
\newtheorem{df}{Definition}[section]
\author{Michał Miśkiewicz}
\title{Discrete Reifenberg-type theorem}
\address{Institute of Mathematics, University of Warsaw,\newline Banacha 2, 02-097 Warszawa, Poland}
\email{m.miskiewicz@mimuw.edu.pl}
\thanks{The research has been supported by the NCN grant no. 2012/05/E/ST1/03232 (years 2013-2017).}
\subjclass[2010]{28A75, 28A78}
\keywords{Reifenberg parametrization, Jones' square number}
\begin{document}

\begin{abstract}
The paper proves that a bound on the averaged Jones' square function of a measure implies an upper bound on the measure. Various types of assumptions on the measure are considered. The theorem is a generalization of a result due to A. Naber and D. Valtorta in connection with measure bounds on the singular set of harmonic maps. 
\end{abstract}

\maketitle

\section{Introduction}

\subsection*{Reifenberg-type theorems}

Classical Reifenberg's theorem states that if a closed set $S \subseteq \R^n$ is well approximated by affine $k$-planes (in the sense of Hausdorff distance) at all balls centered in $S$, then $S$ is bi-H{\"o}lder equivalent with a plane. In this paper we consider approximation in the sense of Hausdorff semi-distance, i.e. sets with holes are allowed. 

The quality of this approximation is measured by Jones' height excess numbers $\beta$. Fix natural numbers $1 \le k < n$ and let $\mu$ be a Radon measure on $\R^n$; the basic example is $\mu = \lambda^k \llcorner S$, where $S$ is a $k$-dimensional set and $\lambda^k$ is the $k$-dimensional Hausdorff measure. We define 
\begin{equation}
\label{eq:betaq}
\beta_{\mu,q}(x,r) = \inf_{V^k} \left( r^{-(k+q)} \int_{\B_r(x)} d^q(y,V^k) \dd \mu(y) \right)^{1/q}. 
\end{equation}
This is the $L^q$ norm of $d(y,V^k)/r$ on $\B_r(x)$ with respect to the measure $r^{-k} \mu$, where $V^k$ is the best affine $k$-plane. 

In order to obtain an upper bound on the measure $\mu$, a uniform bound on $\beta_q(x,r)$ is not sufficient (see Example \ref{ex:flat-snowflake}). The upper bound can follow from a bound on Jones' square function 
\begin{equation}
\label{eq:Jones-square}
J_{\mu,q}(x,r) = \int_0^r \beta_{\mu,q}^2(x,s) \frac{\dd s}{s}. 
\end{equation}
The geometric importance of $J_{\mu,q}$ is illustrated by Example \ref{ex:very-flat-snowflake}. The subscript $\mu$ shall be omitted when it is clear from the context.

\medskip

There are many results concerning the consequences of a bound on Jones' square function. David and Toro \cite{DaTo12} showed that if $S$ satisfies the assumptions of Reifenberg's theorem and $J_{\lambda^k \llcorner S,1}(x,1)$ is uniformly bounded, then the parametrization of $S$ obtained in Reifenberg's theorem is Lipschitz continuous. Azzam and Tolsa \cite{To15}, \cite{AzTo15} characterized rectifiable measures by the condition $J_{\mu,2} < \infty$ $\mu$-a.e., assuming that the upper density is finite $\mu$-a.e. 

This paper is concerned with obtaining upper bounds on the measure $\mu$. In this direction, Naber and Valtorta \cite{NaVa17} proved that there is $\delta(n)>0$ such that if 
\[
r^{-k} \int_{\B_r(x)} J_2(y,r) \dd \mu(y) \le \delta^2 
\]
holds for any ball $\B_r(x) \subseteq \B_2$, then $\mu(\B_1) \le C(n)$. This was proved in two cases: when $\mu$ is a discrete measure and when $\mu = \lambda^k \llcorner S$. In the latter case, the authors also obtained rectifiability of $S$. 

However, it was the discrete version \cite[Th.~3.4]{NaVa17} that was used to obtain an upper bound on the singular set $\lambda^k(\operatorname{Sing} u)$ of a harmonic map $u$ in terms of its Dirichlet energy. A possible application to singular sets of solutions of nonlinear PDEs is one of the main motivations of this paper. 

\subsection*{Basic notation}

The balls centered in $0$ are $\B_r = \B_r(0)$, the measure of $k$-dimensional unit ball is $\omega_k$ and $\lambda \B_r(x) = \B_{\lambda r}(x)$ is the scaled ball. For any set $E$, $\B_r(E)$ is the Minkowski sum $E+\B_r$, i.e. the $r$-neighbourhood of $E$. 

If $\mathcal{S} = \{ \B_j \}$ is a collection of balls, then $\sC \sS$ stands for the set of centers of these balls and $\lambda \mathcal{S} = \{ \lambda \B_j \}$ is the collection of scaled balls with the same centers. We denote the union by 
\[ \bigcup \mathcal{S} = \bigcup_j \B_j. \]

As in \cite{DaTo12}, we use the normalized local Hausdorff distance 
\[ d_{x,r}(E,F) = \frac{1}{r} \dist_H (E \cap \B_r(x), F \cap \B_r(x)), \]
where $\dist_H$ is the standard Hausdorff distance. 

\subsection*{Statement of the main results}

The following is a slightly improved version of Naber and Valtorta's theorem \cite[Th.~3.4]{NaVa17}. The main difference is that the upper bound $J$ is not assumed to be small. Moreover, the theorem holds for any $2 \le q < \infty$. 

\begin{tw}[discrete Reifenberg]
\label{th:discrete-R}
Let $\sS = \{ \B_{r_j}(x_j) \}$ be a collection of disjoint balls in $\B_2$, $\mu = \sum_j \omega_k r_j^k \delta_{x_j}$ be its associated measure and let $\beta_q(x,r)$, $J_q(x,r)$ be defined as in \eqref{eq:betaq}, \eqref{eq:Jones-square}, where $2 \le q < \infty$. Assume that for each ball $\B_r(x) \subseteq \B_2$ we have 
\begin{equation}
\label{eq:mu-flatness}
r^{-k} \int_{\B_r(x)} J_q(y,r) \dd \mu(y) \le J. 
\end{equation}
Then the following estimate holds: 
\begin{equation}
\label{eq:mu-bound}
\mu(\B_1) = \sum_{x_j \in B_1} \omega_k r_j^k \le C(n,q) \cdot \max \left( 1,J^{\frac{q}{q+2}} \right). 
\end{equation}
\end{tw}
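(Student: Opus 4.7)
The plan is to follow the strategy of Naber--Valtorta's proof of their Theorem~3.4, extending it in two directions: from $L^2$ to $L^q$ for $q \ge 2$, and from the small-$J$ regime to arbitrary $J$ (with the claimed polynomial factor $J^{q/(q+2)}$).

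For $J \le \delta_0(n,q)$ small, I would apply an $L^q$-adaptation of Naber--Valtorta's argument to conclude $\mu(\B_1) \le C_0(n,q)$. Their proof rests on an $L^2$-best-plane estimate relating $\beta_2$ to the dispersion of $\mu$, together with a Reifenberg-type construction of an approximating $k$-surface via a sequence of Lipschitz perturbations. The passage to $L^q$ uses the pointwise H\"older-type inequality
\[
\beta_2(y,r)^2 \le \beta_q(y,r)^2 \cdot (\mu(\B_r(y))/r^k)^{1-2/q},
\]
so the $L^q$-Jones integral controls the $L^2$ one up to a loss in the local density, which is the very quantity we ultimately wish to bound. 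This loss can be absorbed by a standard bootstrap (carrying the density bound along in the iteration), at the cost of adjusting the small constants $\delta_0$ and $C_0$.

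For $J > \delta_0$ the hypothesis \eqref{eq:mu-flatness} is scale-invariant under rescaling of the ambient space, so a plain rescaling does not reduce $J$. Instead, I would select a scale $r_* = r_*(n,q,J) \in (0,1]$ and a Vitali family $\{\B_{r_*}(x_i)\}$ centered at points of $\supp\mu$ with disjoint balls $\B_{r_*/5}(x_i)$, such that the $\B_{r_*}(x_i)$ cover $\supp\mu \cap \B_1$. The role of $r_*$ is to localize the Jones integral: on each cover ball (after rescaling to unit size) one hopes to have the effective Jones quantity $\le \delta_0$, so that the small-$J$ step yields $\mu(\B_{r_*}(x_i)) \le C_0 r_*^k$. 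Summing over the cover produces the total measure bound.

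The main obstacle is the cardinality estimate for the Vitali cover together with the choice of $r_*$. The naive volume count $\#\{i\} \lesssim r_*^{-n}$ would yield the wrong exponent, so one must exploit the $k$-dimensional nature of $\mu$: the relevant cover number should be at most $r_*^{-k}$ up to a further factor controlled by the global Jones bound. Balancing the per-ball growth $r_*^k$, the $k$-dimensional cover count, and the effective Jones constant at scale $r_*$ produces the exponent $q/(q+2)$ in the final inequality, but executing the balance rigorously — most plausibly through an inductive or stopping-time refinement of the Vitali decomposition, together with careful tracking of constants through the NV iteration — is where the real technical work lies.
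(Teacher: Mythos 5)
Your small-$J$ step is in the spirit of the paper, but your reduction of the large-$J$ case to it has a genuine gap, and it is the large-$J$ case that is the whole point of the theorem. As you yourself note, the hypothesis \eqref{eq:mu-flatness} is scale invariant: it asserts the \emph{same} bound $J$ for every ball $\B_r(x) \subseteq \B_2$, so passing to a cover at scale $r_*$ and rescaling each cover ball to unit size does not improve the effective Jones constant at all --- there is no scale at which it ``hopes to be'' $\le \delta_0$, and snowflake-type measures show the bound can be essentially saturated at all scales and centers, so no choice of $r_*(n,q,J)$ exists. The second half of that step is also circular: the claim that the Vitali cover of $\supp\mu \cap \B_1$ by $r_*$-balls has cardinality $\lesssim r_*^{-k}$ (up to a factor controlled by $J$) is itself a packing/upper-measure bound of exactly the kind the theorem is supposed to produce, so you would need the conclusion at the intermediate scales to justify the count. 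The balancing ``$r_*^k$ per ball times $r_*^{-k}$ balls times an effective Jones constant'' therefore never gets off the ground, and the exponent $q/(q+2)$ cannot be extracted this way.

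The paper handles large $J$ without any reduction to the small-$J$ regime. Instead, the target density bound $M$ is made a parameter of the whole construction: one proves by induction on scales that $\mu(\B_{r_j}(x)) \le M r_j^k$ for balls disjoint from the centers of the small balls (Claim \ref{cl:weak-estimate}), and the good/bad dichotomy in the covering uses the threshold $\tau M r^k$. Because good balls then carry measure $\ge \tau M r^k$, averaging $\beta_q$ against $\mu$ \emph{gains} a factor $M^{-1}$ (estimate \eqref{eq:beta-pointwise-est}), the tilt-excess comparison of best planes gains $M^{-1}$ (Lemma \ref{lem:tilt-excess}), the bi-Lipschitz constants of the Reifenberg maps are $1 + C M^{-\frac{q+2}{q}}\delta_q^2$, and the excess set obeys $\mu(E^{\le A}) \le C_3 M^{-\frac q2} J^{\frac q2}$. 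The induction closes once $C_2 M^{-\frac{q+2}{q}} J \le 1$ and $C_3 M^{-\frac{q+2}{2}} J^{\frac q2} \le \frac12$, and these self-consistency conditions are precisely what force $M = C(n,q)\max\bigl(1, J^{\frac{q}{q+2}}\bigr)$; that is where the exponent comes from. Your bootstrap idea for absorbing the density loss in the $L^2$-to-$L^q$ comparison points in this direction (the paper works with $\beta_q$ directly, using Jensen in the tilt-excess lemma and the estimate \eqref{eq:beta-nonlinear-est} $\beta_q^q \lesssim M^{-\frac q2} J^{\frac{q-2}{2}}\delta_q^2$ for the Markov bound on the excess set), but to repair the proposal you would have to replace the covering-at-a-good-scale reduction by this kind of $M$-dependent induction rather than treat large $J$ as a separate case.
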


The choice of the normalizing constant $\omega_k$ is motivated by the comparison of $\mu$ with $k$-dimensional Hausdorff measure, but has no importance for the theorem. 

\medskip

The proof of Theorem \ref{th:discrete-R} follows the lines of \cite{NaVa17}. This generalization is made possible by relaxing the inductive claim in the construction and carefully keeping track of the constant. 

This observation also leads to other possible extensions, discussed in Section \ref{ch:extensions}. First, Theorem \ref{th:non-discrete} and Remark \ref{rem:non-discrete} generalize the above to measures $\mu$ with controlled upper density, in particular to the case $\mu = \lambda^k \llcorner S$. Second, Theorem \ref{th:weakened} shows that, with minor modifications, the proof applies also with \eqref{eq:mu-flatness} replaced by a weaker assumption 
\[
\dashint_{\B_r(x)} J_q(y,r) \dd \mu(y) \le J. 
\]

\subsection*{Outline of the proof of Theorem \ref{th:discrete-R}}

The main tool is Reifenberg's construction of surfaces $T_0,T_1,T_2,\ldots$ approximating the support of $\mu$. The bound on Jones' square function $J_q$ \eqref{eq:mu-flatness} enables us to prove that this approximation is efficient. There are three key properties that we need: 
\begin{itemize}
\item The total area $|T_i|$ of the approximating surface is estimated from above via $\beta_q$ numbers (see \eqref{eq:key-surface}). 
\item The measures $\mu$ and $\lambda^k \llcorner T_i$ are comparable on (at least some) balls $\B_{r_i}(x)$ centered near $T_i$ (see \eqref{eq:key-comparison}). 
\item The region outside some neighborhood of $T_i$ has small measure $\mu$ (see \eqref{eq:key-excess-set}). 
\end{itemize}
It is intuitive that these three imply some bound on the measure $\mu$. Indeed, once they are derived, we shall see at the end of Section \ref{ch:final-derivation} that the final estimate is an easy consequence. 

\section{Examples}

Reifenberg's theorem states that any $\varepsilon$-Reifenberg flat set is $\alpha$-H\"{o}lder equivalent with a $k$-plane. This leads to finite Hausdorff measure in dimension $k/\alpha$. As $\varepsilon \to 0$, $\alpha$ tends to $1$ and the dimension bound $k/\alpha$ gets arbitrarily close to $k$. The example below shows that under these assumptions this bound cannot be improved. 

\begin{ex}[flat snowflake]
\label{ex:flat-snowflake}
Fix a small angle $\theta$ and consider a modification of the Koch curve (a snowflake): each segment is divided into three segments of equal length and the middle segment is replaced by two segments, each of them at angle $\theta$ to the original segment (the original construction is obtained for $\theta = \pi / 6$). We denote the curve obtained by starting with a unit segment and iterating the above procedure by $K$. 
\end{ex}

If $\theta$ is small, $K$ is $\varepsilon$-Reifenberg flat and $\alpha$-H\"{o}lder equivalent with a segment. For $\theta \approx 0$ we have $\varepsilon \approx \theta \approx 0$ and $\alpha \approx 1$. Still, the Hausdorff dimension of $K$ is greater than $1$. This example shows that Reifenberg's theorem is optimal -- $\varepsilon$-Reifenberg flatness condition does not imply a bound on the $k$-dimensional Hausdorff measure. 

\medskip

Since $\varepsilon$-Reifenberg flatness condition is not enough to imply a bound on the $k$-dimensional Hausdorff measure, we investigate an improved example taken from \cite{DaTo12}. It suggests that the proper hypothesis is a bound on Jones' square function \eqref{eq:Jones-square}. 

\begin{ex}[very flat snowflake]
\label{ex:very-flat-snowflake}
Modify the previous example by taking another angle $\theta_i$ at each stage $i$ of the construction. After $N$ stages we have a curve of length 
\begin{equation*}
\prod_{i=1}^N \frac{2+\frac{1}{\cos \theta_i}}{3} 
= \prod_{i=1}^N \left ( 1 + \frac{1}{6} \theta_i^2 + o(\theta_i^2) \right ). 
\end{equation*}
The product is convergent if and only if the sum $\sum_i \theta_i^2$ converges. The measure $\lambda^1(K)$ of the limit curve can be bounded in terms of this sum. 
\end{ex}
Since the angles $\theta_i$ are comparable with $\beta_q$ numbers taken on the corresponding balls, this shows that indeed the exponent $2$ in the definition of Jones' square function $J_q$ \eqref{eq:Jones-square} is natural. It also suggests that this function can be used to bound the $k$-dimensional measure; indeed, a result of this type was proved in \cite{DaTo12}. In this paper we relax this assumption by concerning a bound on the average $\dashint_{\B_r(x)} J_q(y,r) \dd \mu(y)$ or on $r^{-k} \int_{\B_r(x)} J_q(y,r) \dd \mu(y)$ for each ball $\B_r(x)$. 

\section{Technical constructions}

The tools discussed in this section are well known and most of them are cited from \cite{NaVa17}. Some technical corrections were made in Lemmata \ref{lem:concentration}, \ref{lem:tilt-excess} (counterparts of \cite[4.7,~4.8]{NaVa17}). These corrections come from the fact that the ball $\B_1$ cannot be covered by finitely many balls $\B_\rho(x_i)$ contained in $\B_1$. Thus one is forced to work with a weaker condition $x_i \in \B_1$, in consequence the balls are contained in a slightly larger ball $\B_{1+\rho}$. 

\subsection*{Properties of $\beta$ numbers}

Recall the definitions 
\begin{equation}
\tag{\ref{eq:betaq}}
\beta_q^q(x,r) = \inf_{V^k} r^{-(k+q)} \int_{\B_r(x)} d^q(y,V^k) \dd \mu(y),
\end{equation}
\begin{equation}
\tag{\ref{eq:Jones-square}}
J_{q}(x,r) = \int_0^r \beta_{q}^2(x,s) \frac{\dd s}{s}. 
\end{equation}

Due to the factor $r^{-(k+q)}$ these quantities are scale invariant. Indeed, if $\nu$ is a scaled version of $\mu$, i.e. $\nu(\cdot) = \lambda^{-k} \mu(\lambda \cdot)$, then $\beta_{\nu,q}(0,r) = \beta_{\mu,q}(0,\lambda r)$ and $J_{\nu,q}(0,r) = J_{\mu,q}(0,\lambda r)$. This scaling occurs e.g. if $\nu, \mu$ are discrete measures corresponding to collections of balls $\sS, \lambda \sS$, or $k$-dimensional Hausdorff measure restricted to sets $S, \lambda S$. 

\medskip

First we note the basic continuity property of $\beta_q$. For any $y \in \B_r(x)$ we have $\B_r(x) \subseteq \B_{2r}(y)$ and it follows from the definition that  
\begin{equation}
\label{eq:beta-cont}
\beta_q^q(x,r) \le 2^{k+2} \beta_q^q(y,2r) 
\quad \text{for } y \in \B_r(x). 
\end{equation}
This simple observation leads to an equivalent form of Jones' square function. 

\begin{rem}
\label{rem:flatness-sum}
Fix some $\rho \in (0,1)$ and let $r_\alpha = \rho^\alpha$ for $\alpha = 0,1,2,\ldots$. Then any bound on Jones' square function is (up to a constant depending on $\rho$) equivalent to a bound on 
\begin{equation*}
%\label{eq:mu-flatness-sum}
%\sum_{r_\alpha \le 2r} r^{-k} \int_{\B_r(x)} \beta_q^2(y,r_\alpha) \dd \mu(y) \le J^2. 
\sum_{r_\alpha \le 2r} \beta_q^2(x,r_\alpha). 
\end{equation*}
\end{rem}

\begin{proof}
Similarly to \eqref{eq:beta-cont}, we have 
\[ \beta_q^q(x,r_1) \le (r_2/r_1)^{k+q} \beta_q^q(x,r_2) \quad \text{for } r_1 \le r_2. \] 
Take arbitrary $s \in (0,r)$ and choose $\alpha$ such that $\rho^{\alpha+1} \le s < \rho^{\alpha}$. Then 
\begin{alignat*}{3}
c(\rho) \beta_q^2(x,\rho^{\alpha+1}) & \le \beta_q^2(x,s) && \le C(\rho) \beta_q^2(x,\rho^{\alpha}) \\
\text{and} \quad c(\rho) & \le \int_{\rho^{\alpha+1}}^{\rho^{\alpha}} \frac{\dd s}{s} && \le C(\rho), 
\end{alignat*}
which shows the equivalence. 
\end{proof}

\medskip

Denote the auxiliary numbers 
\begin{equation}
\label{eq:delta2}
\delta_q^2(x,r) = r^{-k} \int_{\B_r(x)} \beta_q^2(y,r) \dd \mu(y). 
\end{equation}
Note that assumption \eqref{eq:mu-flatness} together with Remark \ref{rem:flatness-sum} yields a very rough estimate $\delta_q^2(x,r) \le C J$. Moreover, 
\[
\delta_q^2(x_1,r_1) \le C(r_1/r_2) \delta_q^2(x_2,r_2) \qquad \text{if } \B_{r_1}(x_1) \subseteq \B_{r_2}(x_2).  
\]

\medskip

Yet another corollary of \eqref{eq:beta-cont} can be obtained by taking the average over all $y \in \B_r(x)$: 
\[ \beta_q^2(x,r) \le C(k,q) \dashint_{\B_r(x)} \beta_q^2(y,2r) \dd \mu(y). \]
If one assumes a lower bound $\mu(\B_r(x)) \ge \tau(n) M r^k$ (as it will be satisfied in the applications), this can be further estimated by 
\begin{equation}
\label{eq:beta-pointwise-est}
\dashint_{\B_r(x)} \beta_q^2(y,2r) \dd \mu(y)
\le \frac{1}{\tau M r^k} \int_{\B_r(x)} \beta_q^2(y,2r) \dd \mu(y) 
= C(n,tau) M^{-1} \delta_q^2(x,2r). 
\end{equation}

\medskip

Finally, an estimate for $\beta_q^q$ can be obtained by 
\begin{alignat}{3}
\label{eq:beta-nonlinear-est}
\beta_q^q(x,r)
& = \left ( \beta_q^2(x,r) \right )^{q/2} 
&& \lesssim \left ( \dashint_{\B_r(x)} \beta_q^2(y,2r) \dd \mu(y) \right )^{q/2} \nonumber \\
& \lesssim \left ( M^{-1} \delta_q^2(x,2r) \right )^{q/2} 
&& \lesssim M^{-\frac{q}{2}} J^{\frac{q-2}{2}} \delta_q^2(x,2r). 
\end{alignat}

\subsection*{Comparison of $L^q$-best planes via $\beta_q$}

Due to compactness of the Grassmannian $G(k,n)$ and continuity of $d(y,V)$, there exists a $k$-plane minimizing $\int_{\B_r(x)} d^q(y,V) \dd \mu$ (there may be more than one). We choose any of the $L^q$-best planes and denote it by $V(x,r)$. 

We will estimate the distances between the $L^q$-best planes on different balls using $\beta_q$ numbers. More precisely, we want to prove that the distance between $V(x_1,r_1)$ and $V(x_2,r_2)$ is estimated via $\beta_q$ numbers if $r_1,r_2$ are comparable and controlled by $|x_1-x_2|$.

In the case of the standard $\beta_\infty$ numbers this is an elementary geometric problem. As shown by simple examples in \cite{NaVa17}, in case of $\beta_q$ numbers one is forced to assume some kind of Ahlfors-David regularity of the measure $\mu$. Here we use the condition $\tau M r^k \le \mu(\B_r) \le M r$ because we want to study the dependence on $M$ with $\tau(n)$ fixed. 

\begin{lem}
\label{lem:concentration}
There exists $\rho_0(n,\tau)$ such that for $\rho \le \rho_0$ the following holds. If 
\[ \mu(\B_\rho(x)) \le \rho^k \]
holds for all $x \in \B_1$ and $\mu(\B_1) \ge \tau$, then for every affine plane $V \le \R^n$ of dimension $\le k-1$, there exists a point $x \in \B_1$ such that 
\[ d(x,V) > 10\rho, \quad \mu(\B_\rho(x)) \ge C(n,\rho) > 0. \]
\end{lem}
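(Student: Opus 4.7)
The plan is to combine a Euclidean volume bound on tubes around low-dimensional planes with a pigeonhole on a finite $\rho$-cover of $\B_1$. First I would show that the $10\rho$-neighborhood of $V$ carries little $\mu$-mass. Since $\dim V \le k-1$, the Lebesgue measure of $N_{11\rho}(V) \cap \B_{3/2}$ is at most $C(n)\,\rho^{n-k+1}$. Take a maximal $\rho$-separated subset $\{z_i\}$ of $N_{10\rho}(V) \cap \B_1$; then the disjoint balls $\B_{\rho/2}(z_i)$ all lie inside $N_{11\rho}(V) \cap \B_{3/2}$ (assuming $\rho \le 1/2$), so their count satisfies $\#\{z_i\} \le C(n)\,\rho^{-(k-1)}$. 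Each $z_i \in \B_1$, so the hypothesis gives $\mu(\B_\rho(z_i)) \le \rho^k$, and the balls $\B_\rho(z_i)$ cover the tube by maximality. Hence
\[
\mu\bigl(N_{10\rho}(V) \cap \B_1\bigr) \le C_1(n)\,\rho.
\]
Choosing $\rho_0(n,\tau)$ so that $C_1(n)\,\rho_0 \le \tau/2$, this gives $\mu(\B_1 \setminus N_{10\rho}(V)) \ge \tau/2$ for every $\rho \le \rho_0$.

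Next I would run a pigeonhole on a finite cover of $\B_1$. Let $\{x_i\}_{i=1}^N$ be a maximal $\rho$-separated set in $\B_1$, so the balls $\B_\rho(x_i)$ cover $\B_1$ and $N \le C(n)\,\rho^{-n}$. The same tube volume estimate, now applied to the $\rho$-separated subset of those $x_i$ with $d(x_i,V) \le 10\rho$, shows that at most $C(n)\,\rho^{-(k-1)}$ of the indices are ``near'' $V$. Summing $\mu(\B_\rho(x_i)) \le \rho^k$ over them and separating the ``far'' indices yields
\[
\tau \le \sum_{i=1}^{N} \mu(\B_\rho(x_i)) \le C_1(n)\,\rho + \sum_{d(x_i,V) > 10\rho} \mu(\B_\rho(x_i)),
\]
and for $\rho \le \rho_0$ the right-hand sum exceeds $\tau/2$. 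Since it has at most $N \le C(n)\,\rho^{-n}$ terms, some index $i$ with $d(x_i,V) > 10\rho$ must satisfy $\mu(\B_\rho(x_i)) \ge \tau\,\rho^n / (2C(n)) =: C(n,\rho) > 0$, and this $x_i$ is the point required by the lemma.

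The only subtlety is that the density hypothesis applies only to balls centered in $\B_1$; this is automatic in both steps, since the $z_i$ and the $x_i$ are chosen in $\B_1$ by construction. Past this bookkeeping, the argument is a routine combination of a volume bound on tubes around a $(k-1)$-plane with a counting estimate, and I do not anticipate a serious difficulty.
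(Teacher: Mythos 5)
Your argument is correct: the tube-volume count gives $\mu\bigl(\B_{10\rho}(V)\cap\B_1\bigr)\le C_1(n)\,\rho$, and the pigeonhole over a maximal $\rho$-separated net of $\B_1$ (with at most $C(n)\rho^{-n}$ points, of which only $C(n)\rho^{-(k-1)}$ can lie within $10\rho$ of a plane of dimension $\le k-1$) produces the required far point; this is essentially the same covering/counting argument as in Naber--Valtorta's Lemma~4.7, to which the paper delegates this statement rather than proving it in the text. One cosmetic remark: your lower bound $\tau\rho^{n}/(2C(n))$ appears to depend on $\tau$, but since $\rho\le\rho_0$ forces $\tau\ge 2C_1(n)\rho$, it is in turn bounded below by $c(n)\rho^{n+1}$, so the constant can indeed be written in the form $C(n,\rho)$ claimed in the lemma.
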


\medskip

Now we can prove the aforementioned tilt-excess result. We denote $\kappa = \frac{1}{1-\rho}$ so that $\kappa \B_\rho(x) \subseteq \kappa \B_1(0)$ for any $x \in \B_1(0)$. 

\begin{lem}
\label{lem:tilt-excess}
Fix $\tau \in (0,1)$ and $\rho(n,\tau)$ as in Lemma \ref{lem:concentration}; denote $\kappa = \frac{1}{1-\rho}$. 
Let $\mu$ be a positive Radon measure. Assume that $\mu(\B_1) \ge \tau M$ and that $\mu(\B_{\rho^2}(y)) \le M \rho^{2k}$ for every $y \in \B_\kappa$. Additionally, let $x \in \B_1$ be such that $\mu(\B_\rho(x)) \ge \tau M \rho^k$. %\red{[the upper bound is needed only for $y \in \B_{\rho}(x)$]}

Then if $d(x, V(0,\kappa)) \le \rho/2$ or $d(x, V(x,\kappa \rho)) \le \rho/2$, then the distance between the $L^q$-best planes is estimated by 
\[ d_{x,\rho}^q(V(0,\kappa), V(x,\kappa \rho)) \le C(n,q,\rho,\tau) M^{-1} \left( \beta_q^q(0,\kappa) + \beta_q^q(x,\kappa \rho) \right). \]
\end{lem}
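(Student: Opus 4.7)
The plan is to locate $k+1$ points in $\B_\rho(x)$ that are both geometrically well-spread on the scale $\rho^2$ and carry substantial $\mu$-mass, then use these points to pin down both $L^q$-best planes simultaneously.

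First, I would rescale $\mu$ onto $\B_1$ by setting $\tilde\mu(E) = \mu(x+\rho E)/(M\rho^k)$. The hypothesis $\mu(\B_\rho(x)) \ge \tau M\rho^k$ becomes $\tilde\mu(\B_1) \ge \tau$, and the upper bound $\mu(\B_{\rho^2}(y)) \le M\rho^{2k}$ for $y \in \B_\kappa$ becomes $\tilde\mu(\B_\rho(z)) \le \rho^k$ for $z \in \B_1$ (using $|x|+\rho \le \kappa$). These are exactly the hypotheses of Lemma \ref{lem:concentration} applied to $\tilde\mu$. Starting from any point $\tilde y_0$ of positive $\tilde\mu(\B_\rho(\tilde y_0))$, I would iteratively apply Lemma \ref{lem:concentration} with $V = \mathrm{aff}(\tilde y_0,\ldots,\tilde y_{i-1})$ (dimension $\le k-1$) to produce $\tilde y_0,\ldots,\tilde y_k \in \B_1$ with $d(\tilde y_i,\mathrm{aff}(\tilde y_0,\ldots,\tilde y_{i-1})) > 10\rho$ and $\tilde\mu(\B_\rho(\tilde y_i)) \ge C(n,\rho)$. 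Undoing the scaling gives points $y_i = x+\rho \tilde y_i \in \B_\rho(x)$ with $d(y_i,\mathrm{aff}(y_0,\ldots,y_{i-1})) > 10\rho^2$ and $\mu(\B_{\rho^2}(y_i)) \ge C(n,\rho) M\rho^k$.

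Next, abbreviate $V_1 = V(0,\kappa)$, $V_2 = V(x,\kappa\rho)$. Since $\B_{\rho^2}(y_i) \subseteq \B_\kappa(0) \cap \B_{\kappa\rho}(x)$, the definitions of $V_1,V_2$ yield
\[
\int_{\B_{\rho^2}(y_i)} d^q(y,V_1)\dd\mu \le \kappa^{k+q}\beta_q^q(0,\kappa), \qquad \int_{\B_{\rho^2}(y_i)} d^q(y,V_2)\dd\mu \le (\kappa\rho)^{k+q}\beta_q^q(x,\kappa\rho).
\]
Dividing by $\mu(\B_{\rho^2}(y_i)) \ge C(n,\rho)M\rho^k$ and applying Chebyshev, there is $p_i \in \B_{\rho^2}(y_i)$ with $d^q(p_i,V_1) + d^q(p_i,V_2) \le \varepsilon^q$, where
\[
\varepsilon^q := C(n,q,\rho,\tau)\, M^{-1}\bigl(\beta_q^q(0,\kappa) + \beta_q^q(x,\kappa\rho)\bigr).
\]
Because $|p_i-y_i| \le \rho^2$, the spread property of the $y_i$ transfers to the $p_i$: one checks inductively that $d(p_i,\mathrm{aff}(p_0,\ldots,p_{i-1})) \ge 8\rho^2$, say, so that $p_0,\ldots,p_k$ form a $k$-simplex whose smallest height is comparable to $\rho^2$.

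Finally, a routine linear-algebra argument shows that if two $k$-planes $V_1,V_2$ both pass within distance $\varepsilon$ of each vertex of a $k$-simplex contained in $\B_{2\rho}(x)$ whose heights are $\gtrsim \rho^2$, then $\mathrm{dist}_H(V_1 \cap \B_\rho(x), V_2 \cap \B_\rho(x)) \le C(n,\rho)\,\varepsilon$, provided at least one of the planes meets $\B_\rho(x)$ — which is exactly what the hypothesis $d(x,V_j) \le \rho/2$ supplies. Normalizing by $\rho$ and raising to the $q$-th power yields the claimed estimate. In the degenerate regime $\varepsilon \gtrsim \rho$ the bound is trivial because $d_{x,\rho}(V_1,V_2) \le 2$, so one may freely assume $\varepsilon$ is small. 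The main difficulty is the geometric step: controlling how the affine span of perturbed points depends on the perturbation, and extracting from the vertex-closeness an $L^\infty$ Hausdorff estimate on $\B_\rho(x)$ with a constant depending only on $n$ and $\rho$; the spread bound $\gtrsim \rho^2$ in a simplex of diameter $\lesssim \rho$ is exactly what keeps this constant finite, and tracking the resulting $\rho$-dependence is the bookkeeping to be done carefully.
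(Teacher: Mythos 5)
Your overall strategy (use Lemma \ref{lem:concentration} after rescaling to find $k+1$ well-separated points of substantial mass in $\B_\rho(x)$, pin both $L^q$-best planes to nearby representative points, and conclude by an effective-spanning argument, with the hypothesis $d(x,V_j)\le\rho/2$ used only to make the Hausdorff estimate two-sided) is the same as the paper's. However, there is a genuine gap in the step where you transfer the separation from the points $y_i$ to the perturbed points $p_i$. Lemma \ref{lem:concentration} only guarantees $d(y_i,\operatorname{aff}(y_0,\ldots,y_{i-1}))>10\rho^2$, i.e.\ a separation of the \emph{same order} as the perturbation $|p_i-y_i|\le\rho^2$, and consecutive $y_j$ may themselves be only about $10\rho^2$ apart. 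In that regime the affine span of the perturbed points can tilt by an angle of order $1$ relative to $\rho$: for instance with $y_0=0$, $y_1=(10\rho^2,0,0)$, $y_2=(2\rho,10.1\rho^2,0)$, choosing $p_0=y_0$ and $p_1=(10\rho^2,50.5\rho^3,0)\in\B_{\rho^2}(y_1)$ makes the line $\operatorname{aff}(p_0,p_1)$ pass exactly through $y_2$, so $d(p_2,\operatorname{aff}(p_0,p_1))\le\rho^2$ and can even vanish. Since Chebyshev gives you no control over where $p_i$ sits inside $\B_{\rho^2}(y_i)$, your claim $d(p_i,\operatorname{aff}(p_0,\ldots,p_{i-1}))\ge 8\rho^2$ is false in general, the simplex of the $p_i$ can degenerate, and the barycentric-coordinate bound $|\alpha_i|\lesssim 1/\rho$ on which your final linear-algebra step rests disappears — no lower bound of the form $c(n)\rho^m$ survives, so allowing constants to depend on $\rho$ does not save it.

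The fix is to interleave selection and perturbation, which is exactly what the paper does: having already produced $p_0,\ldots,p_{i-1}$ (in the paper, their projections $p_0',\ldots,p_{i-1}'$ onto $V(0,\kappa)$), apply Lemma \ref{lem:concentration} with $V=\operatorname{aff}(p_0,\ldots,p_{i-1})$ (rescaled) rather than with the span of the $y_j$; then the new point $y_i$ satisfies $d(y_i,\operatorname{aff}(p_0,\ldots,p_{i-1}))>10\rho^2$ and the representative $p_i\in\B_{\rho^2}(y_i)$ inherits $d(p_i,\operatorname{aff}(p_0,\ldots,p_{i-1}))>9\rho^2$, so the induction closes. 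Two smaller points: your base point $\tilde y_0$ is only required to have positive mass, but you later use $\mu(\B_{\rho^2}(y_0))\ge C(n,\rho)M\rho^k$ for it as well (apply the concentration lemma once with a trivial low-dimensional $V$, or a pigeonhole over a $\rho$-cover of $\B_1$, to get this); and the degenerate regime you may discard is $\varepsilon\gtrsim\rho^2$, not $\varepsilon\gtrsim\rho$ — harmless, since your constants may depend on $\rho$, but it should be stated at the correct threshold.
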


\begin{proof}[Sketch of proof]
We assume that $d(x, V(0,\kappa)) \le \rho/2$; in the other case one has to exchange the roles od $V(0,\kappa)$ and $V(x,\kappa \rho)$. For simplicity we take $M=1$, as this amounts to changing $\mu$ by a constant factor. 

The proof follows \cite[Lemma~4.8]{NaVa17}. We choose $k+1$ points $y_0,\ldots,y_k \in \B_\rho(x)$ with $\mu(\B_{\rho^2}(y_i)) \ge c(n,\tau)$. Denote the center of mass by $p_i$ and its projection onto $V(0,\kappa)$ by $p_i'$. We require $p_i'$ to effectively span $V(0,\kappa) \cap \B_\rho(x)$, i.e. 
\[ d(p_{i+1}', \operatorname{span}(p_0',\ldots,p_i')) > 10 \rho^2. \]
This is done by inductive application of Lemma \ref{lem:concentration} and the elementary inequality $|y_i,p_i| \le \rho$. Jensen's inequality yields 
\begin{align*}
d^q(p_i,V(0,\kappa)) & \le C \beta_q^q(0,\kappa), \\
d^q(p_i,V(x,\kappa \rho)) & \le C \beta_q^q(x,\kappa \rho).
\end{align*}

Hence all points $p_i'$ are close to $V(x, \kappa \rho)$. Since these points effectively span $V(0,\kappa) \cap \B_\rho(x)$, this $k$-plane is contained in a small neighborhood of $V(x,\kappa \rho) \cap \B_\rho(x)$. The assumption $d(x, V(0,\kappa)) \le \rho/2$ ensures that the inclusion works both ways (see \cite[Lemma~4.2]{NaVa17}). 
\end{proof}

In the proof of Theorem \ref{th:discrete-R}, the values of $\tau,\rho$ shall be fixed depending only on the dimension $n$. 

\subsection*{Bi-Lipschitz diffeomorphism construction}

Here we introduce the construction later used to obtain the approximating surfaces in the proof of Theorem \ref{th:discrete-R}. 

For some $r>0$, let $\sJ = \{ \B_r(x_i) \}$ be a finite collection of balls such that $\frac 12 \sJ$ is disjoint. For each ball choose a $k$-dimensional affine plane $V_i$ and denote the orthogonal projection onto $V_i$ by $\pi_i$. One can choose a smooth partition of unity $\lambda_i \colon \R^n \to [0,1]$ such that 
\begin{enumerate}
\item $\sum_i \lambda_i \equiv 1$ in $\bigcup 3 \sJ$, 
\item $\lambda_i \equiv 0$ outside $4 \B_{r}(x_i)$ for all $i$, 
\item $||\nabla \lambda_i||_\infty \le C(n)/r$, 
\item if we complete this partition with the smooth function $\psi = 1 - \sum_i \lambda_i$, then $||\nabla \psi||_\infty \le C(n)/r$. 
\end{enumerate}

\begin{df}
\label{df:sigma}
Given $\sJ, \lambda_i, p_i, V_i$ as above, define a smooth function $\sigma \colon \R^n \to \R^n$ by 
\begin{equation*}
\sigma(x) = \psi(x) x + \sum_i \lambda_i(x) \pi_{i} (x). 
\end{equation*}
\end{df}

The function $\sigma$ interpolates between the identity and the projections onto the affine planes $V_i$. Note that $\sigma = \id$ outside of the union $\bigcup 4 \sJ$, as on this region we have $\psi \equiv 1$. On the other hand, if $V_i$ are all close to some $V$, then $\sigma$ is close to the orthogonal projection onto $V$ in the region $\bigcup 3 \sJ$. This will be made precise in Lemma \ref{lem:squash}. 

Lemma \ref{lem:squash} is a modified version of \cite[Lemma~4.12]{NaVa17}. It is essentially a counterpart of the squash lemma used to prove classical Reifenberg's theorem. The crucial additional part of the following is the bi-Lipschitz estimate for $\sigma$ that is quadratic in $\delta_0,\delta_1$; this should be compared to the measure estimate in Example \ref{ex:very-flat-snowflake} and the definition \eqref{eq:Jones-square} of Jones's square function. In order to obtain this quadratic estimate, let us first consider the following geometric fact. 

\begin{lem}
\label{lem:quadratic-estimate}
Let $V_1,V_2$ be two linear $k$-planes and $\pi_1,\pi_2$ be the corresponding orthogonal projections. If $d_{0,1}(V_1,V_2) \le \delta$, then $||\pi_1 \pi_2 - \id||_{V_1 \to V_1} \le C(n) \delta^2$. 
\end{lem}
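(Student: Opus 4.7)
The plan is to use the identity $\pi_1 v = v$ for $v \in V_1$ to rewrite
\[
v - \pi_1 \pi_2 v = \pi_1 v - \pi_1 \pi_2 v = \pi_1(v - \pi_2 v),
\]
so that the problem reduces to showing that \emph{each} of the two factors on the right carries a factor of $\delta$. For the inner factor, the vector $w := v - \pi_2 v$ lies in $V_2^\perp$, and since the hypothesis $d_{0,1}(V_1,V_2)\le\delta$ says precisely that every unit vector of $V_1$ is within distance $\delta$ of $V_2$, one obtains $|v-\pi_2 v|\le \delta|v|$ immediately by linearity. So the only remaining task is to bound the outer factor: $\pi_1$ restricted to $V_2^\perp$, in operator norm.

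For this I would observe that $\pi_2 w = 0$ for $w \in V_2^\perp$, whence $\pi_1 w = (\pi_1 - \pi_2)w$, and it suffices to prove the global estimate $\|\pi_1 - \pi_2\|_{\mathrm{op}} \le C(n)\delta$. This is a standard Grassmannian fact in the equidimensional case $\dim V_1=\dim V_2=k$: the operator norm of $\pi_1-\pi_2$ equals the sine of the largest principal angle between $V_1$ and $V_2$, and this in turn is realised as $\max\{|v-\pi_2 v| : v\in V_1,\ |v|=1\}$, which is $\le \delta$ by the inner-factor estimate already established. Equivalently, for $\delta<1$ the map $\pi_2|_{V_1}\colon V_1\to V_2$ is injective, hence bijective by dimension count, and a short orthogonal-complement argument then forces $V_1^\perp$ and $V_2^\perp$ to be $O(\delta)$-close as well.

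Combining the two ingredients gives
\[
|v-\pi_1\pi_2 v| = |\pi_1(v-\pi_2 v)| \le \|\pi_1-\pi_2\|\cdot |v-\pi_2 v| \le C(n)\delta\cdot\delta|v| = C(n)\delta^2|v|,
\]
as required. The main technical point — and the only place where equidimensionality of $V_1,V_2$ is used crucially — is the symmetric operator-norm bound $\|\pi_1-\pi_2\|\le C(n)\delta$; without it one would get only a linear estimate, and it is precisely this symmetric bound that lets one extract the second factor of $\delta$ from the structure $v-\pi_2 v\in V_2^\perp$, producing the quadratic decay.
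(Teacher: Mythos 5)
Your proof is correct, but it follows a genuinely different route from the paper. You factor the error as $v-\pi_1\pi_2 v=\pi_1(v-\pi_2 v)$ for $v\in V_1$ and extract one factor of $\delta$ from $|v-\pi_2 v|\le\delta|v|$ (homogeneity of the Hausdorff hypothesis) and a second from the smallness of $\pi_1$ on $V_2^\perp$, which you reduce to the symmetric bound $\|\pi_1-\pi_2\|\le C\delta$ via the principal-angle identity for equidimensional subspaces. The paper instead realizes $V_2=Q(V_1)$ with $Q\in\operatorname{SO}(n)$, $\|Q-\id\|\lesssim\delta$, expands $Q=\id+A+O(\delta^2)$ with $A$ antisymmetric, and observes that in $\pi_2=Q\pi_1Q^T$ the first-order commutator term is annihilated after composing with $\pi_1$ and restricting to $V_1$; your argument avoids the Gram--Schmidt/rotation machinery and the Taylor bookkeeping entirely, at the price of invoking the Grassmannian fact $\|\pi_1-\pi_2\|=\sin\theta_{\max}$. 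Two remarks that would make your version self-contained: first, since $d_{0,1}$ is a Hausdorff distance it is symmetric, so both gaps $\sup_{v\in V_1,|v|=1}d(v,V_2)$ and $\sup_{w\in V_2,|w|=1}d(w,V_1)$ are $\le\delta$, and the identity $\|\pi_1-\pi_2\|=\max$ of the two gaps then gives the operator bound without any appeal to equidimensionality; second, you can bypass $\|\pi_1-\pi_2\|$ altogether by duality, since for $w\in V_2^\perp$ one has $|\pi_1 w|=\sup_{v\in V_1,\,|v|=1}\langle w,v\rangle=\sup_{v\in V_1,\,|v|=1}\langle w,v-\pi_2 v\rangle\le\delta|w|$, which plugs directly into your factorization and yields the quadratic bound with constant $1$.
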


\begin{proof}
It follows that $V_2 = Q(V_1)$ for some $Q \in \operatorname{SO}(n)$, $||Q-\id|| \lesssim t$. We sketch a proof of this fact, following the proof in \cite[Lemma~4.6]{NaVa17}. Fix some orthonormal bases: $(v_{1,i})_i$ for $V_1$, $(w_{1,j})_{j}$ for $V_1^\perp$. Then applying Gram-Schmidt orthonormalization process (which is Lipschitz) to $(\pi_2(v_{1,i}))_i$ and $(\pi_2^\perp(w_{1,j}))_j$, we get orthonormal bases: $(v_{2,i})_i$ for $V_2$, $(w_{2,j})_{j}$ for $V_2^\perp$. This change of coordinates corresponds to some $Q \in \operatorname{SO}(n)$ as needed. 

Using the fact that the tangent space to $\operatorname{SO}(n)$ at identity is $\operatorname{so}(n)$, we have 
\[
Q = \id + A + O(\delta^2), \quad A^T=-A, \quad ||A|| \le C(n) \delta. 
\]
The projections $\pi_1,\pi_2$ are similiar via $Q$, hence 
\begin{align*}
\pi_2 
& = Q \pi_1 Q^T \\
& = \left ( \id + A + O(\delta^2) \right ) \pi_1 \left ( \id - A + O(\delta^2) \right ) \\
& = \pi_1 + (A \pi_1 - \pi_1 A) - A \pi_1 A + O(\delta^2). 
\end{align*}
The third term is bounded by $||A \pi_1 A|| \le ||A||^2 \le C(n) \delta^2$ and the second disappears after composing with $\pi_1$ and restricting to $V_1$: $\pi_1 (A \pi_1 - \pi_1 A) = 0$ on $V_1$. Hence $||\pi_1 \pi_2 - \id||_{V_1 \to V_1} \le C(n) \delta^2$. 
\end{proof}

The following lemma deals with graphs of functions that are $C^1$ small at scale~$r$. To simplify the notation, we introduce the normalized $C^1$ norm 
\[
||g||_{C^1_r} := r^{-1} ||g||_{\infty} + ||\nabla g||_{\infty}. 
\]

\begin{lem}[squash lemma]
\label{lem:squash}
Fix some ball $\B_{r}(y) \subseteq \R^n$ and a $k$-dimensional affine plane $V$ such that $d(y,V) \le r/2$. Suppose that for all balls $\B_r(x_i) \in \sJ$ centered in $10 \B_{r}(y)$ we have 
\[ d_{x_i,r}(V_i,V) \le \delta_1. \]
Suppose also that $G_0 \subseteq \R^n$ is the graph $G_0 = \{ x+g_0(x): x \in V \} \cap 5 \B_{r}(y)$ of a~small function $g_0 \colon V \to V^\perp$, i.e. $||g_0||_{C^1_r} \le \delta_0$. If $\delta_0 \le 1$ and $\delta_1 \le \delta(n)$, then 
\begin{enumerate}

\item The set $G_1 = \sigma(G_0)$ restricted to $4 \B_r(y)$ is a graph of a function $g_1 \colon V \to V^\perp$ with 
\[ ||g_1||_{C^1_r} \le C(n)(\delta_0+\delta_1). \]
There is ratio $\theta \ge 3 - C(n)(\delta_0+\delta_1)$ such that on each of the balls $\theta \B_r(x_i)$ the previous bound is actually independent of $\delta_0$, i.e. $||g_1||_{C^1_r} \le C(n)\delta_1$. 

\item The map $\sigma \colon G_0 \to G_1$ is a $C^1$ diffeomorphism from $G_0$ to $G_1$ and 
\[
|\sigma(z)-z| \le C(n) (\delta_0+\delta_1) r 
\quad \text{for } z \in G_0. 
\]
Moreover, its bi-Lipschitz constant does not exceed $1 + C(n)(\delta_0^2 + \delta_1^2)$. 

\end{enumerate}
\end{lem}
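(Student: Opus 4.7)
The plan is to split $\sigma$ into its $V$- and $V^\perp$-components after changing coordinates so that $V$ is a linear subspace, obtain the graph property from a perturbed inverse function theorem applied to the $V$-component, and then extract the bi-Lipschitz constant from a direct derivative computation that exploits the near-orthogonal structure. First I normalize to $r=1$ by scaling and translate so that $V$ becomes a linear $k$-plane; the offset $d(y,V)\le r/2$ is absorbed into a redefined $g_0$ whose $C^1_r$-norm is still $\le C(n)\delta_0$. Each $V_i$ is then the graph of an affine map $u\mapsto L_iu+c_i$ from $V$ to $V^\perp$ with $\|L_i\|\le C\delta_1$ and $|c_i|\le C\delta_1$, and the orthogonal projection $\pi_i$ admits an explicit form $\pi_i(z)=P_iz+q_i$ with linear part $P_i$.

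For the graph property I use $\psi+\sum_i\lambda_i\equiv 1$ and the formula for $\pi_i$ to compute, for $z=u+g_0(u)\in G_0$, that $\pi\sigma(z)=u+O(\delta_1(\delta_0+\delta_1))$ and $|\pi^\perp\sigma(z)|\le C(\delta_0+\delta_1)$. The first estimate shows $\pi\sigma|_{G_0}$ is a near-identity $C^1$ diffeomorphism onto a region covering $4\B_r(y)\cap V$, so $g_1:=\pi^\perp\sigma\circ(\pi\sigma|_{G_0})^{-1}$ is a well-defined graph representative of $G_1$; its $C^1_r$-norm follows from differentiation using $\|\nabla\lambda_i\|_\infty,\|\nabla\psi\|_\infty\le C/r$. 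The improved bound on $\theta\B_r(x_i)$ relies on $\psi\equiv 0$ on $3\B_r(x_i)$, so that $\sigma$ is a convex combination of projections $\pi_j$ onto planes $V_j$ all within $C\delta_1$ of each other; the $g_0$-dependence of $\pi_j(u+g_0(u))$ enters only through $L_jg_0(u)$, contributing $O(\delta_0\delta_1)$ rather than $O(\delta_0)$. The threshold $\theta\ge 3-C(n)(\delta_0+\delta_1)$ is chosen so that the $\sigma$-preimage of $\theta\B_r(x_i)$ stays inside $3\B_r(x_i)$.

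The displacement bound $|\sigma(z)-z|\le C(n)(\delta_0+\delta_1)r$ is immediate from $\sigma(z)-z=\sum_i\lambda_i(z)(\pi_i(z)-z)$ and $|\pi_i(z)-z|=d(z,V_i)\le C(\delta_0+\delta_1)r$ on $G_0$. For the bi-Lipschitz estimate I differentiate $\sigma$ at $z_0\in G_0$ along $w\in T_{z_0}G_0$; using $\sum_i\nabla\lambda_i+\nabla\psi=0$ the derivative collapses to
\[
D\sigma(z_0)w=w-\sum_i\lambda_i(z_0)(I-P_i)w+\sum_i\langle\nabla\lambda_i(z_0),w\rangle(\pi_i(z_0)-z_0).
\]
Both correction sums consist of $C(n)$-many vectors lying in the $V_i^\perp$'s (within $\delta_1$ of $V^\perp$) with total magnitude $\le C(\delta_0+\delta_1)|w|$. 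Expanding $|D\sigma(z_0)w|^2$, the diagonal contribution reduces to $-2\sum_i\lambda_i(|w|^2-|P_iw|^2)$; applying Lemma~\ref{lem:quadratic-estimate} to the linear planes $T_{z_0}G_0$ and the linear part of $V_i$ (both within $C(\delta_0+\delta_1)$ of $V$) gives $0\le|w|^2-|P_iw|^2\le C(\delta_0+\delta_1)^2|w|^2$. The remaining cross terms $\langle w,\nu\rangle$ for $\nu$ a $V_i^\perp$-vector satisfy $|\langle w,\nu\rangle|\le C(\delta_0+\delta_1)|w||\nu|$, since $V_i^\perp$ is within $C(\delta_0+\delta_1)$ of $T_{z_0}G_0^\perp$. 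Combining yields $|D\sigma(z_0)w|^2=|w|^2(1+O(\delta_0^2+\delta_1^2))$, and the bi-Lipschitz bound follows by integration along paths in $G_0$.

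The main obstacle is obtaining the \emph{quadratic} exponent in the bi-Lipschitz bound. A naive estimate of $D\sigma-\id$ is only linear in $\delta_0+\delta_1$, and the improvement requires simultaneous use of two near-orthogonality facts: the correction vectors lie within $\delta_1$ of $V^\perp$, while tangent vectors to $G_0$ lie within $\delta_0$ of $V$. Lemma~\ref{lem:quadratic-estimate} packages this into the sharp length-preservation needed to absorb the main diagonal term; carefully bookkeeping the partition-of-unity cross terms so that each contribution is manifestly quadratic rather than merely linear is the main technical hurdle.
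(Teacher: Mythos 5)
Your proposal is correct, and for part (1) it follows essentially the same skeleton as the paper: split $\sigma$ along $V$ and $V^\perp$ in graph coordinates, show the tangential component is a near-identity map of $V$ (invertible by the inverse function theorem), and define $g_1$ as the normal component composed with that inverse; the improved bound on $\theta\B_r(x_i)$ via $\psi\equiv 0$ on $3\B_r(x_i)$ and the preimage argument for $\theta\ge 3-C(\delta_0+\delta_1)$ also matches. Where you genuinely diverge is the quadratic bi-Lipschitz estimate, which is the heart of the lemma. The paper factorizes $\sigma|_{G_0}=(\id+g_1)\circ\sigma^T\circ(\id+g_0)^{-1}$, gets the factors $\id+g_0$, $\id+g_1$ for free at order $1+C(\delta_0^2+\delta_1^2)$ by Pythagoras, and isolates the hard part in the flat map $\sigma^T\colon V\to V$, improving $\|\nabla h^T\|_\infty$ from $C\delta_1$ to $C(\delta_0^2+\delta_1^2)$ by splitting $\pi_V\nabla\pi_i(\id+\nabla g_0)-\id$ into a $C\delta_0\delta_1$ term and a $C\delta_1^2$ term via Lemma \ref{lem:quadratic-estimate}. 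You instead compute $D\sigma$ directly on $T_{z_0}G_0$, collapse it with $\nabla\psi=-\sum_i\nabla\lambda_i$, and expand $|D\sigma(z_0)w|^2$: the diagonal term is the perfect square $|(I-P_i)w|^2$ and the cross terms pair a nearly-$V$ vector with nearly-$V^\perp$ vectors, so the quadratic gain is automatic and Lemma \ref{lem:quadratic-estimate} is barely needed (your identity $\langle(I-P_i)w,w\rangle=|(I-P_i)w|^2$ already does the work). This is arguably more direct; the price is that you must convert the pointwise derivative bound into a two-sided chordal estimate, which requires injectivity of $\sigma|_{G_0}$ (supplied by your part-(1) diffeomorphism claim) and the comparison of intrinsic and chordal distances on the $C^1$-small graphs $G_0$ and $G_1$ — both contribute only $1+O(\delta_0^2+\delta_1^2)$ factors, but they deserve an explicit sentence rather than the bare phrase ``integration along paths,'' and you should note the bounded overlap $C(n)$ of the balls $4\B_r(x_i)$ (from disjointness of $\tfrac12\sJ$) when summing the correction terms. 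With those small points made explicit, your argument yields the same constants and conclusions as the paper's.
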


\begin{proof}
Note that $V_i$ are also close to $V$ on the larger ball: $d_{y,10r}(V_i,V) \le C \delta_1$ for all $i$. For $x \in V$ denote $z = x+g(x)$ and 
\[
h(x) = \sum_i \lambda_i(z) \left ( \pi_i(x+g_0(x)) - x \right ), 
\]
so that 
\begin{align*}
\sigma(x+g_0(x)) 
& = \psi(z) (x+g_0(x)) + \sum_i \lambda_i(z) \pi_i (x+g_0(x)) \\
& = x + \psi(z)g_0(x) + h(x). 
\end{align*}
For simplicity, assume that $0 \in V$. Then we can consider the decomposition of $\sigma$ obtained by projecting onto the linear plane $V$ and its orthogonal complement $V^\perp$: 
\begin{align*}
\sigma(x+g_0(x)) & = \sigma^T(x) + \sigma^\perp(x), \\
\sigma^T(x) & = x + h^T(x), \\
\sigma^\perp(x) & = \psi(z) g_0(x) + h^\perp(x). 
\end{align*}

Now we show that $\sigma^T - \id$ and $\sigma^\perp$ are $C^1_r$-small. Indeed, it is easily checked that $||\pi_i(x+g_0(x))-x||_{C^1_r} \le C \delta_1$ for all $x \in V \cap 5 \B_r(x_i)$ and hence for all $x$ such that $\lambda_i (z) > 0$. Note that this is independent of $\delta_0$, if only $\delta_0 \le 1$. Therefore $||h^T||_{C^1_r}, ||h^\perp||_{C^1_r} \le C \delta_1$. 

The remaining term is estimated by $||\psi(z) g_0(x)||_{C^1_r} \le C \delta_0$, but it vanishes for all $x$ such that $z \in \bigcup 3 \sJ$. 

Thus we obtained 
\[
||\sigma^T - \id||_{C^1_r} \le C \delta_1, \quad ||\sigma^\perp||_{C^1_r} \le C (\delta_0 + \delta_1)
\]
We choose $\delta_1 \le \delta(n)$ small in order to apply the inverse function theorem for $\sigma^T \colon V \to V$. Thus we obtain the inverse function $\phi$ satisfying $||\phi - \id||_{C^1_r} \le C \delta_1$ and $\phi = \id$ outside $\bigcup 4 \sJ$. The inverse enables us to write 
\[
\sigma(x+g_0(x)) = \sigma^T(x) + g_1(\sigma^T(x)), \quad \text{where } g_1(x) = \sigma^\perp(\phi(x)). 
\]
This proves point (1) and the first part of point (2). 

What is left is the estimate for the bi-Lipschitz constant of $\sigma$. To this end, we decompose $\sigma$ in the following way: 
\[
G_0 \ni x+g_0(x) 
\xmapsto{(\id+g_0)^{-1}} x 
\xmapsto{\sigma^T} \sigma^T(x) 
\xmapsto{\id+g_1} \sigma^T(x) + g_1(\sigma^T(x)) 
\in G_1
\]

The Lipschitz constant of the map $V \xrightarrow{\id+g_0} G_0$ is bounded by $\sqrt{1+\delta_0^2}$ and its inverse is a contraction. Similarly, the bi-Lipschitz constant of $V \xrightarrow{\id+g_1} G_1$ is bounded by $\sqrt{1+C(\delta_0^2+\delta_1^2)}$. 

To obtain a quadratic bound for $V \xrightarrow{\sigma^T} V$, we need to improve the estimate $||\nabla h^T||_{\infty} \le C \delta_1$ derived before. To this end, compute 
\begin{align*}
\nabla h^T(x) & = \sum_i \nabla \lambda_i(z) \nabla z \left ( \pi_V \pi_i (x+g_0(x)) - x \right ) \\
& + \sum_i \lambda_i(z) \left ( \pi_V \nabla \pi_i (\id + \nabla g_0(x)) - \id \right )
\end{align*}
In the second sum, the expression in parentheses is $(\pi_V \nabla \pi_i \nabla g_0) + (\pi_V \nabla \pi_i - \id)$. The first term is bounded by $C \delta_0 \delta_1$, while for the second Lemma \ref{lem:quadratic-estimate} implies the bound $C \delta_1^2$. The estimates for the first sum are obtained analogously. Hence $||\nabla h^T||_{\infty} \le C (\delta_0^2+\delta_1^2)$ and the bi-Lipschitz constant of $\sigma^T$ is bounded by $1+C (\delta_0^2+\delta_1^2)$. In consequence, we obtain the bound for $\sigma$ as a composition. 
\end{proof}

We end with a related lemma, which shows that if $G$ is a graph over $V_1$ and $V_1,V_2$ are close, then it is also a graph over 
$V_2$. 

\begin{lem}
\label{lem:changing-planes}
Let $V_1,V_2$ be two affine $k$-planes and $d_{y,r}(V_1,V_2) \le \delta$. Let $G \subseteq \B_r$ be a graph over $V_1$ of a function $g_1$, $||g_1||_{C^1_r} \le \delta$. If $\delta \le \delta(n)$, then $G \cap \theta \B_r$ is also a graph over $V_2$ of a function $g_2$, $||g_2||_{C^1_r} \le C \delta$. The ratio $\theta$ satisfies $1-C\delta < \theta < 1$. 
\end{lem}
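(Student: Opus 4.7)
The plan is to parametrize $G$ as a graph over $V_2$ by using the orthogonal projection $\pi_2$ onto $V_2$. Without loss of generality I take $y=0$ and write $F\colon V_1 \to \R^n$, $F(x) = x + g_1(x)$, so that $G$ is the image of $F$. The natural candidate for the new parametrization is
\[
g_2(w) = F(\phi(w)) - w, \qquad \phi = (\pi_2 \circ F)^{-1},
\]
which automatically takes values in $V_2^\perp$. The task therefore splits into two parts: showing that $\pi_2 \circ F$ is a diffeomorphism onto a region containing $V_2 \cap \theta \B_r$, and verifying the $C^1_r$ bound on $g_2$.

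For the diffeomorphism step, I would apply the inverse function theorem. Writing $\pi_{V_i}$ for the linear projection onto the direction of $V_i$, the derivative of $\pi_2 \circ F$ at $x \in V_1$ equals $\pi_{V_2} \circ (\id + \nabla g_1(x))$ restricted to the direction of $V_1$. By Lemma~\ref{lem:quadratic-estimate}, this restricted $\pi_{V_2}$ differs from an isometric identification of the directions of $V_1$ and $V_2$ by $O(\delta^2)$, and $\|\nabla g_1\|_\infty \le \delta$ by hypothesis. Consequently the derivative is within $C\delta$ of an isometry, so for $\delta \le \delta(n)$ small enough, $\pi_2 \circ F$ is a local diffeomorphism whose inverse $\phi$ has derivative $C\delta$-close to the canonical identification $V_2 \to V_1$. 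The hypothesis $d_{0,r}(V_1, V_2) \le \delta$ guarantees that $\phi$ extends to cover $V_2 \cap \theta \B_r$ for some $\theta \ge 1 - C\delta$: any $w$ in this region has a nearby pre-image $x \in V_1 \cap \B_r$, which lies in the domain where $F$ is defined.

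For the $C^1_r$ bound on $g_2$, the sup estimate follows from the identity $|g_2(w)| = d(F(\phi(w)), V_2)$: for $x = \phi(w) \in V_1 \cap \B_r$, the closeness of $V_1, V_2$ gives $d(x, V_2) \le r\delta$, and $|g_1(x)| \le r \delta$ by hypothesis, so $|g_2(w)| \le 2 r \delta$. For the gradient, differentiating $g_2(w) = F(\phi(w)) - w$ and projecting onto $V_2^\perp$ gives
\[
\nabla g_2 = \pi_{V_2^\perp} \circ \nabla F \circ \nabla \phi.
\]
Since $\nabla F = \id + \nabla g_1$ with $\|\nabla g_1\|_\infty \le \delta$, the restriction of $\pi_{V_2^\perp}$ to the direction of $V_1$ has norm $O(\delta)$ (as $V_1, V_2$ are $\delta$-close), and $\nabla \phi$ is bounded, so $\|\nabla g_2\|_\infty \le C\delta$.

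The main obstacle is the careful bookkeeping of the identifications between the two affine planes required to set up the inverse function theorem with precise constants, and verifying that the image of $\phi$ actually covers the full set $V_2 \cap \theta \B_r$ with $\theta \ge 1 - C\delta$; all the derivative and norm estimates themselves reduce to straightforward combinations of Lemma~\ref{lem:quadratic-estimate} with the hypothesis $\|g_1\|_{C^1_r} \le \delta$ once this setup is in place.
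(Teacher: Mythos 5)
Your proposal is correct and follows essentially the same route as the paper's own sketch: you consider the composition $V_1 \xrightarrow{\id+g_1} G \xrightarrow{\pi_{V_2}} V_2$, show it is a diffeomorphism via the inverse function theorem (using that its derivative is a small perturbation of an isometry, for which an $O(\delta)$ bound suffices -- the quadratic refinement from Lemma~\ref{lem:quadratic-estimate} is not needed here), and define $g_2$ through the inverse $\phi$, exactly as in the paper. The remaining bookkeeping you defer (that the image of $\phi$ covers $V_2 \cap \theta \B_r$ with $1-C\delta < \theta < 1$) is also left implicit in the paper's sketch, so no genuine gap relative to the source.
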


\begin{proof}[Sketch of proof]
We follow the proof of Lemma \ref{lem:squash}. The composition 
\[
V_1 \xmapsto{\id+g_1} G \xmapsto{\pi_{V_2}} V_2 
\]
is shown to be a diffeomorphism. If we denote its inverse by $\phi$, then $G \cap \theta \B_r$ is a graph over $V_2$ of $g_2(x) = \phi(x) + g_1(\phi(x))$. 
\end{proof}

\section{Proof of the main theorem}

\subsection*{Induction upwards}

Fix $\tau(n) = 80^{-1} 6^{-n}$, then choose $\rho(n,\tau) \in (0,1)$ according to Lemma \ref{lem:concentration} applied with the value $2^{-k} \tau$ instead of $\tau$, finally denote $\kappa = \frac{1}{1-\rho}$. Without loss of generality we can assume that each of the balls in $\sS$ has radius $r_j = \rho^j$ for some natural $j \ge 1$. Otherwise we exchange each $\B_{r}(x) \in \sS$ for $\B_{r_j}(x)$, where we take $j$ so that $r_j \le r < r_{j-1}$ if $r < \rho$ and $j=1$ if $r \ge \rho$. This only changes the values in \eqref{eq:mu-flatness} and \eqref{eq:mu-bound} by a multiplicative constant. Similarly, we can assume $\mu$ to be supported in $\B_1$, i.e. $\sC \sS \subseteq \B_1$ ($\beta_{\mu,q}$ numbers are monotone in $\mu$). 

Let $\sS^i$ denote those balls that have radius $r_i = \rho^i$; we denote $\sS^{> i}$, $\sS^{< i}$ etc. analogously. We can further assume $\sS$ to be finite. Otherwise we proceed with the truncated collection $\sS^{\le A}$ and its associated measure $\mu^{\le A}$; this measure also satisfies the assumption \eqref{eq:mu-flatness}. If we are able to obtain the claim \eqref{eq:mu-bound} for $\mu^{\le A}$ with a constant independent of $A$, then by passing to the limit $A \to \infty$ we obtain the claim for $\mu$. Thus let us assume that the smallest radius in the collection is $r_A$. 

We focus on proving by induction the following claim: 

\begin{cl}
\label{cl:weak-estimate}
For each $j = A,\ldots,0$ and any ball $\B_{r_j}(x) \subseteq \B_2$ disjoint from $\sC \sS^{\le j}$, 
\[ \mu(\B_{r_j}(x)) \le M r_j^k. \]
%\[ \mu_{> j} (\B_{r_j}(x)) \le M r_j^k. \]
At the end of the proof, it shall be clear that $M(n,J) = C(n) \cdot \max(1,J)$ works here. 
\end{cl}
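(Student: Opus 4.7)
The plan is downward induction on $j$, from $j = A$ to $j = 0$ (corresponding to moving upward in scale, matching the section title). The base case $j = A$ is immediate: the hypothesis gives $\B_{r_A}(x) \cap \sC\sS = \emptyset$, so the ball contains no atom of $\mu$ and $\mu(\B_{r_A}(x)) = 0$.

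For the inductive step, I fix $\B_{r_j}(x) \subseteq \B_2$ disjoint from $\sC\sS^{\le j}$ and assume the claim at all finer scales. By the scale invariance recorded after Lemma \ref{lem:tilt-excess}, I rescale to $r_j = 1$, $x = 0$, so the task becomes $\mu(\B_1) \le M$. If $\mu(\B_1) \le \tau M$ there is nothing to prove; otherwise the induction hypothesis applied at scales $r_{j+1}$ and $r_{j+2}$ supplies the bound $\mu(\B_{\rho^2}(y)) \le M \rho^{2k}$ for $y \in \B_\kappa$ required by Lemma \ref{lem:tilt-excess}, so the local $L^q$-best planes $V(y, \kappa r_i)$ are well-defined and tilt only quadratically between scales. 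On this good regime I then run the Naber--Valtorta Reifenberg construction: starting from $T_0 := V(0,\kappa) \cap \B_1$, iteratively set $T_{i+1} = \sigma_i(T_i)$ with $\sigma_i$ the squash map of Definition \ref{df:sigma} associated to a Vitali cover $\sJ_{i+1}$ of significant-mass points of $\mu$ at scale $r_{i+1}$, built from the planes $V(y, \kappa r_{i+1})$. Lemmata \ref{lem:tilt-excess}, \ref{lem:squash} and \eqref{eq:beta-pointwise-est} control the squash parameters $\delta_0, \delta_1$ at each step by $\delta_0^2 + \delta_1^2 \le C M^{-1} \delta_q^2(y_i, r_i)$.

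The decisive computation is the area estimate
\[
|T_A| \le \omega_k \prod_i \bigl(1 + C M^{-1} \delta_q^2(y_i, r_i)\bigr) \le \omega_k \exp\bigl(C M^{-1} \textstyle\sum_i \delta_q^2(y_i, r_i)\bigr),
\]
where Remark \ref{rem:flatness-sum} together with the hypothesis \eqref{eq:mu-flatness} bounds the summation by $C(n) J$. A Vitali packing of $\supp \mu$ near $T_A$---legitimate thanks to the base case, which eliminates the finest scale---then yields $\mu(\B_1) \le C(n) |T_A| \le C(n)(1 + J)$, and picking $M = C(n) \max(1, J)$ with a sufficiently large absolute constant closes the induction.

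\textbf{Main obstacle.} The critical difficulty is bookkeeping the accumulated bi-Lipschitz distortion across all scales so that its excess over $1$ remains \emph{linear} in $J$---not exponential in the number of scales nor superlinear in $J$. This relies on the \emph{quadratic} (rather than merely linear) bi-Lipschitz bound in Lemma \ref{lem:squash}, on the passage from pointwise $\beta_q^2$ to averaged $\delta_q^2$ via \eqref{eq:beta-pointwise-est} (which in turn needs the quantitative lower mass bound $\mu(\B_1) > \tau M$ forced by the non-trivial alternative), and on the conversion from the integral assumption \eqref{eq:mu-flatness} into a scale-sum via Remark \ref{rem:flatness-sum}. A secondary subtlety is choosing the Vitali covers $\sJ_{i+1}$ so that the balls deemed ``good'' at each scale mesh with the hypotheses of Lemma \ref{lem:squash} (in particular, the required smallness of $\delta_1$ and the inclusion property of the graphs), and so that balls where the induction hypothesis fails carry negligible $\mu$-mass and can be absorbed into the error.
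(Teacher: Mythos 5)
Your outline is the right one---downward induction on the scale, squash-map construction of approximating surfaces, area estimate via the quadratic bi-Lipschitz bound---but two of the claims you make in the final step are not correct, and a third piece of the proof is missing entirely.

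\textbf{The comparison with the surface area is wrong.} You write that a Vitali packing yields $\mu(\B_1)\le C(n)\,|T_A|$. This cannot be right: the measure $\mu$ on a ball of radius $r$ disjoint from $\sC\sS^{\le j}$ can be as large as $Mr^k$ (that is exactly the inductive hypothesis), whereas $|T_A\cap\B_r|\sim r^k$, so the ratio is $\sim M$, not a dimensional constant. The correct comparison, derived in the paper only for bad and final balls $\B$, is $\mu(\B)\le C_1\tau M\,|T_A\cap\tfrac12\B|$. The factor $M$ is there because the density of $\mu$ can be of order $M$, and the small factor $\tau(n)$ is precisely what closes the self-referential loop: after summing, one gets $\mu(\bigcup\bad^{\le A}\cup\bigcup\fin^{\le A})\le C_1\tau M|T_A|\le C_1\tau\omega_k M(1+C_2 M^{-(q+2)/q}J)$, and $\tau$ is fixed so that $\omega_k C_1\tau\le 1/4$. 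Your version, with no $\tau M$, would ``prove'' a bound $\mu(\B_1)\le C(n)$ independent of $J$ once $M\ge CJ$, which is obviously false, so something has to give. (Your displayed chain $|T_A|\le C(n)(1+J)$ also does not follow from your own area bound $|T_A|\le\omega_k\exp(CM^{-1}J)$; with $M\gtrsim J$ that exponential is bounded and gives $|T_A|\le C(n)$.)

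\textbf{The product form of the area estimate does not close.} Writing $|T_A|\le\omega_k\prod_i(1+CM^{-1}\delta_q^2(y_i,r_i))$ and controlling the exponent by $\sum_i\delta_q^2(y_i,r_i)\lesssim J$ is not legitimate: at each scale $i$ there are many good balls with pairwise different values of $\delta_q^2$, and the supremum over balls at a given scale cannot be summed over scales (each term is only bounded by $J$, and there are arbitrarily many scales). The paper avoids this by the additive inequality
\[
|T_{i+1}|\le\int_{T_i}\lip_{i+1}^k\dd\lambda^k\le|T_i|+CM^{-\frac{q+2}{q}}\sum_s|T_i\cap 5\B_{r_{i+1}}(y_s)|\,\delta_q^2(y_s,6r_{i-1}),
\]
then uses the local area bound $|T_i\cap 5\B_{r_{i+1}}(y_s)|\le C r_{i+1}^k$ (Proposition \ref{prop:surface-prop}(b)) to cancel the $r^{-k}$ in $\delta_q^2$ and the disjointness of $\tfrac12\good^{i+1}$ to convert the sum over balls into $C\int_{\B_2}\beta_q^2(z,6r_{i-1})\dd\mu(z)$, which \emph{then} sums in $i$ to $CJ$ via Remark \ref{rem:flatness-sum}. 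The weighting by the local surface area and the bounded-overlap step are exactly what is absent from the product argument.

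\textbf{The excess set is missing.} At each stage the good ball $\B_{r_i}(y)$ contains a region $E(y,r_i)=\B_{r_i}(y)\setminus\B_{r_{i+1}/4}(V(y,\kappa r_i))$ of points far from the best plane; these are not covered by the balls of the next scale and do not sit near $T_A$, so they cannot be absorbed into the surface comparison. The paper estimates their measure separately by Markov's inequality together with \eqref{eq:beta-nonlinear-est}, obtaining $\mu(E^{\le A})\le C_3 M^{-q/2}J^{q/2}$, and this term enters the final balance alongside the surface term. Without it your covering argument simply does not cover all of $\supp\mu$, and there is no way to get $\mu(\B_1)\le M$.

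Finally, a minor point: your bi-Lipschitz increment $\delta_0^2+\delta_1^2\le CM^{-1}\delta_q^2$ has the wrong power of $M$. Lemma \ref{lem:tilt-excess} gives $d^q\lesssim M^{-1}\beta_q^q$, hence $d^2\lesssim M^{-2/q}\beta_q^2$, and then \eqref{eq:beta-pointwise-est} upgrades $\beta_q^2\lesssim M^{-1}\delta_q^2$; the combined exponent is $M^{-(q+2)/q}$. Your cruder exponent happens not to break the weaker conclusion $M=C(n)\max(1,J)$, but it is worth noting that the sharper exponent is what produces the bound $C(n,q)\max(1,J^{q/(q+2)})$ stated in Theorem \ref{th:discrete-R}.
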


Note that this estimate fails without the additional disjointness assumption, as for any $x \in \sC \sS^i$ and arbitrarily large $j$ we have $\mu(\B_{r_j}(x)) = \omega_k r_i^k$. Still, Claim \ref{cl:weak-estimate} implies our final claim. Indeed, the collection $\sS^{\le 0}$ is empty, thus $\mu(\B_1) \le M$. 

On the other hand, for $j=A$ any ball disjoint from $\sC \sS^{\le A}$ has measure zero, so the claim is trivial. This is the basis for our upwards induction. 

\subsection*{Induction downwards. An outline of the construction}

Here we assume that Claim \ref{cl:weak-estimate} holds for all $x \in \B_1$ and scales $j+1,\ldots,A$ and consider a ball $\B_{r_j}(x)$. For simplicity let us assume $j=0$ and work with the ball $\B_1$ (i.e. the last step of the upwards induction). 

\medskip

We proceed with Reifenberg's construction of coverings of $\sC \sS \cap \B_1$ at all scales $i=0,\ldots,A$. A covering at scale $i$ will consist of the excess set $E^{\le i}$ and collections of balls $\good^i$, $\bad^i$, $\fin^i$, each of radius $r_i$ and centered in $\sC \sS$. The balls $\fin^i$ will be chosen from the collection $\sS^i$ (hence $\mu(\B) = \omega_k r_i^k$ for $\B \in \fin^i$) and the other balls will be separated according to their measure: $\mu(\B) \ge \tau M r_i^k$ for good balls and $\mu(\B) < \tau M r_i^k$ for bad balls. 

As the first step, we define the approximating surface to be 
\[ T_0 = V(0,\kappa) \le \R^n. \]
The covering of $\sC \sS \cap \B_1$ is obtained by just one good ball $\good^0 = \{ \B_1 \}$. Note that if this ball is in fact bad, there is nothing to prove. 

\medskip

The covering will satisfy the following properties: 
\begin{cl}[properties of the covering]
\label{cl:disjoint}
The support of $\mu$ is covered by the collections of balls $\good^i$, $\bad^{\le i}$, $\fin^{\le i}$ and the excess set $E^{<i}$, i.e. 
\[ \sC \sS \subseteq \bigcup \good^i \cup \bigcup \bad^{\le i} \cup \bigcup \fin^{\le i} \cup E^{< i}. \]
The collections $\frac 12 \good^i$, $\frac 12 \bad^{\le i}$, $\frac 12 \fin^{\le i}$ taken together are disjoint. Moreover, the collection $\good^i$ is disjoint from $\sC \sS^{\le i}$. 
\end{cl}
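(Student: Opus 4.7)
The plan is to argue by induction on the scale $i$, running the inductive construction of $\good^{i+1}, \bad^{i+1}, \fin^{i+1}$ and of the excess increment $E^{i}$ from the stage-$i$ data in parallel with the verification of the three bullets. The base case $i=0$ is immediate: $\good^0 = \{\B_1\}$ covers $\sC\sS \subseteq \B_1$, all other collections are empty, and $\sS^{\le 0} = \emptyset$, so the two disjointness statements hold vacuously.

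For the inductive step I would refine only the good balls. For each $\B_{r_i}(x_\alpha) \in \good^i$, apply a Vitali-type selection to $\sC\sS \cap \B_{r_i}(x_\alpha)$: pick centers $y_\beta$ one at a time so that each new $\tfrac12 \B_{r_{i+1}}(y_\beta)$ is disjoint from every half-ball already fixed, with the \emph{priority rule} that whenever a remaining candidate lies in $\sC\sS^{\le i+1}$ it is chosen before any other. After selection, classify each $\B_{r_{i+1}}(y_\beta)$: it goes to $\fin^{i+1}$ if $y_\beta \in \sC\sS^{i+1}$, to $\good^{i+1}$ if $\mu(\B_{r_{i+1}}(y_\beta)) \ge \tau M r_{i+1}^k$, and to $\bad^{i+1}$ otherwise. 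The excess increment $E^{i}$ absorbs those centers in the good balls of scale $i$ that lie outside the relevant tubular neighbourhood of the approximating plane $T_i$; the precise tube is fixed in the geometric part of the construction and is irrelevant for the set-theoretic statement at hand.

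Verification of the three bullets at scale $i+1$: coverage follows from Vitali maximality applied to the cleaned-up set of centers, combined with the inductive coverage hypothesis; half-radius disjointness is built directly into the selection rule. The delicate point is disjointness of $\good^{i+1}$ from $\sC\sS^{\le i+1}$. Suppose $y \in \sC\sS^{\le i+1}$ lies inside some freshly designated $\B_{r_{i+1}}(z) \in \good^{i+1}$. Then $y$ is the center of a ball of radius $r_j \ge r_{i+1}$; by the priority rule and the fact that larger scales are processed first, $y$ must already be the center of a ball in $\fin^{\le i+1}$. The half-radius disjointness then yields $|y - z| \ge (r_j + r_{i+1})/2 \ge r_{i+1}$, contradicting $y \in \B_{r_{i+1}}(z)$.

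The main obstacle I expect is precisely this last point: the interplay between the priority rule and the half-radius disjointness must be coordinated so that no center of a ball in $\sS^{\le i+1}$ is ever simultaneously unclaimed by a $\fin$ ball and captured inside a newly created good ball. Once this priority is installed at the level of the construction, the remaining two bullets follow from standard Vitali reasoning.
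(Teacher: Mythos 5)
Your per-ball Vitali selection inside each $\B_{r_i}(x_\alpha) \in \good^i$ is not what the paper does, and it introduces two genuine gaps. First, the good balls at scale $i$ are only half-radius disjoint, so they may overlap; running an independent selection inside each one can pick two centers $y_1 \in \B_{r_i}(x_{\alpha_1})$, $y_2 \in \B_{r_i}(x_{\alpha_2})$ with $|y_1-y_2| < r_{i+1}$, breaking the global half-ball disjointness you want. The paper avoids this by taking a \emph{single} maximal $r_{i+1}$-separated subset of $\sC\sS^{>i+1} \cap \bigl( \bigcup \good^i \setminus R^{\le i} \bigr)$. Second, you never explicitly restrict the candidate set to exclude the remainder $R^{\le i} = \bigcup\bad^{\le i} \cup \bigcup\fin^{\le i} \cup E^{\le i}$. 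You mention that the excess set ``absorbs'' off-tube centers, but you do not remove the previously chosen bad and final balls; without that, a newly selected center $y$ could lie inside some old $\B_{r_m}(w) \in \bad^m$ ($m \le i$), and then $\tfrac12\B_{r_{i+1}}(y)$ need not be disjoint from $\tfrac12\B_{r_m}(w)$. The paper's selection region is precisely $\bigcup\good^i \setminus R^{\le i}$, which forces $|y-w| \ge r_m \ge \tfrac12(r_m+r_{i+1})$ and saves the disjointness.

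Your priority rule is also unnecessary and the argument built on it is not airtight. The paper chooses $\fin^{i+1}$ first (it consists of \emph{all} points of $\sC\sS^{i+1}$ in the relevant region, as balls of $\sS^{i+1}$), and then does the separated selection only over $\sC\sS^{>i+1}$. The third bullet of the Claim then follows immediately from the disjointness of the balls in $\sS$: if $z \in \sC\good^{i+1} \subseteq \sC\sS^{>i+1}$ is the center of $\B_{r_m}(z) \in \sS$ with $r_m < r_{i+1}$, and $y \in \sC\sS^{\le i+1}$ is the center of $\B_{r_j}(y) \in \sS$ with $r_j \ge r_{i+1}$, then disjointness of $\sS$ gives $|y-z| \ge r_j + r_m > r_{i+1}$, so $y \notin \B_{r_{i+1}}(z)$. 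No priority ordering is needed. Your version instead asserts that ``$y$ must already be the center of a ball in $\fin^{\le i+1}$'', which is unjustified when $y \in \sC\sS^{\le i}$ (such $y$ is outside $\bigcup\good^i$ by the inductive hypothesis and hence never enters the selection), and the estimate $|y-z| \ge (r_j+r_{i+1})/2$ uses a radius $r_j$ that does not match your own construction, in which $\fin^{i+1}$ balls all have radius $r_{i+1}$. I recommend replacing the priority mechanism by the paper's two-step choice (final balls first, then a single global maximal $r_{i+1}$-separated subset of the left-over small-scale centers away from $R^{\le i}$) and deducing disjointness directly from the disjointness of $\sS$ and the $r_{i+1}$-separation.
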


A sequence of surfaces approximating $\sC \sS$ will also be constructed, but it is not used to obtain Claim \ref{cl:disjoint}. 

\subsection*{Excess set}

For each good ball $\B_{r_i}(y) \in \good^i$ we define the excess set 
\[ E(y,r_i) := \B_{r_i}(y) \setminus \B_{r_{i+1}/4}(V(y,\kappa r_i)). \]
This set is exactly what prevents the set $\sC \sS$ from satisfying the uniform Reifenberg condition $\beta_\infty(y,r_i) \le \rho/4$. Its measure will be estimated via Markov inequality later on.

We sum up over all good balls to obtain 
\[ E^{i} := \bigcup_{\good^i} E(y,r_i). \]
We add it to the previous excess sets: $E^{\le i} := E^{\le i-1} \cup E^{i}$. 

Denote the remainder set 
\[ R^{\le i} := \bigcup \bad^{\le i} \cup \bigcup \fin^{\le i} \cup E^{\le i}. \]
The measure of this set can be estimated in a straightforward way, hence we do not need to cover it in the next steps of our inductive construction. 

\subsection*{Construction of the covering}

In order to cover the set $\bigcup \good^i \setminus R^{\le i}$ at scale $r_i$, we first choose the final balls 
\[ \fin^{i+1} := \left\{ \B_{r_{i+1}}(z) : z \in \sC \sS^{i+1} \cap \left( \bigcup \good^i \setminus R^{\le i} \right) \right\}, \]
so that $\fin^{i+1} \subseteq \sS^{i+1}$. Due to Claim \ref{cl:disjoint}, what is left to cover is the set 
\begin{equation}
\label{eq:set-covered}
\tag{$\star$}
\sC \sS^{>i+1} \cap \left( \bigcup \good^i \setminus R^{\le i} \right).
\end{equation}
We choose any maximal $r_{i+1}$-separated subset $\sC \sJ^{i+1}$ of the set \eqref{eq:set-covered} and consider the collection of balls 
\[ \sJ^{i+1} := \{ \B_{r_{i+1}}(z) : z \in \sC \sJ^{i+1} \}. \]
By maximality, the set \eqref{eq:set-covered} is covered by $\bigcup \sJ^{i+1}$. We divide $\sJ^{i+1}$ into two subcollections: 
\begin{align*}
\good^{i+1} & := \left\{ \B \in \sJ^{i+1} : \mu(\B) \ge \tau M r_{i+1}^k \right\}, \\
\bad^{i+1} & := \left\{ \B \in \sJ^{i+1} : \mu(\B) < \tau M r_{i+1}^k \right\}. 
\end{align*}

\begin{proof}[Proof of Claim \ref{cl:disjoint}]
By inductive hypothesis, $R^{\le i}$ covers $\sC \sS^{\le i}$. We covered the rest of $\sC \sS^{i+1}$ by $\fin^{i+1}$ and $\sC \sS^{>i+1}$ by $\good^{i+1}, \bad^{i+1}$, thus we obtained the desired covering. Since the balls in $\sS$ are disjoint and $\sC \sJ^{i+1}$ is an $r_{i+1}$-separated set, the rest of the claim follows. 
\end{proof}

\subsection*{Construction of the approximating surface}

Here we apply the construction from Definition \ref{df:sigma} for the collection of balls $\sJ = \good^{i+1}$. Thus for each ball good $\B_{r_{i+1}}(y_s)$ there is an associated function $\lambda_s$, which together with $\psi$ forms a partition of unity. We choose $V_s$ as the $L^2$-best plane $V(y_s,\kappa r_{i+1})$ on a slightly enlarged ball. This defines the diffeomorphism 
\begin{align*}
\sigma_{i+1}(x) 
& = \psi(x) x + \sum_s \lambda_s(x) \pi_{V_s} (x) 
% & = x + \sum_s \lambda_s(x) \pi_{V_s}^\perp (p_s-x) \\
\end{align*}
and the surface 
\[ T_{i+1} = \sigma_{i+1}(T_i). \]

The construction is now complete. Our aim is to derive three crucial properties \eqref{eq:key-surface}, \eqref{eq:key-comparison}, \eqref{eq:key-excess-set}. Once these are obtained, the final estimate is an easy consequence. First we need some basic properties of the surfaces constructed above. 

\subsection*{Properties of the approximating surface}

\begin{stw}
\label{prop:surface-prop}
\begin{enumerate}[(a)]

\item For $y \in T_i$, 
\[ |\sigma_{i+1}(y) - y| \le \frac{1}{10} r_{i+1}. \]

\item If $\B_{r_{i+1}}(y) \in \good^{i+1}$, then 
\[
|T_{i+1} \cap 5 \B_{r_{i+1}(y)}| \le 10 \cdot \omega_k (5r_{i+1})^k, 
\]

\item $\sigma_{i+1} \colon T_i \to T_{i+1}$ is bi-Lipschitz and for every $\B_{r_{i+1}}(y) \in \good^{i+1}$ its bi-Lipschitz constant on $5 \B_{r_{i+1}}(y)$ is bounded by 
\[
\lip_{i+1} \le 1 + C(n,q,\rho,\tau) M^{-\frac{q+2}{q}} \delta_q^2(y,6r_{i-1}), 
\]
in particular $\lip_{i+1} \le 2^{1/k}$. 

\item If $\B_{r_{i+1}}(y) \in \good^{i+1}$, the surface $T_{i+1}$ is a graph over $V(y,\kappa r_{i+1})$ on $2 \B_{r_{i+1}}(y)$ of a $C^1$ function satisfying 
\[ ||f||_{C^1_{r_{i+1}}} \le C(n,q,\rho,\tau) M^{-\frac{q+2}{q}} \delta_q^2(y,5r_i). \]

\end{enumerate}
\end{stw}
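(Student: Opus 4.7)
The plan is to establish (a)--(d) simultaneously by induction on $i$, with the base case $T_0 = V(0,\kappa)$ trivial since $T_0$ is a $k$-plane. For the inductive step, fix $\B_{r_{i+1}}(y) \in \good^{i+1}$. By the covering construction, $y$ lies in some parent good ball $\B_{r_i}(y^\ast) \in \good^i$, so by (d) at level $i$ the surface $T_i$ is a graph over $V(y^\ast,\kappa r_i)$ with $C^1_{r_i}$-norm at most $\varepsilon_i := C M^{-(q+2)/q}\delta_q^2(y^\ast,5r_{i-1})$. Lemma \ref{lem:changing-planes}, combined with Lemma \ref{lem:tilt-excess} applied between $V(y^\ast,\kappa r_i)$ and $V(y,\kappa r_{i+1})$, rephrases this as a graph representation of $T_i$ near $y$ over the new reference plane $V(y,\kappa r_{i+1})$; call the $C^1_{r_{i+1}}$-norm of the resulting function $\delta_0$.

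The other input to Lemma \ref{lem:squash} is the tilt parameter $\delta_1 := \sup_s d_{y_s,r_{i+1}}(V_s, V(y,\kappa r_{i+1}))$, with the supremum taken over $\B_{r_{i+1}}(y_s) \in \good^{i+1}$ lying near $y$. Lemma \ref{lem:tilt-excess} (whose density hypothesis is satisfied on good balls and their parents, thanks to Claim \ref{cl:weak-estimate} and the definition of $\good^{i+1}$) gives $\delta_1^q \le CM^{-1}(\beta_q^q(y,\kappa r_{i+1}) + \beta_q^q(y_s,\kappa r_{i+1}))$; combined with (\ref{eq:beta-pointwise-est})--(\ref{eq:beta-nonlinear-est}) this upgrades into the averaged estimate $\delta_1^2 \le CM^{-(q+2)/q}\delta_q^2(y,5r_i)$, and the same analysis bounds $\delta_0^2$ using $r_{i-1}$ in place of $r_i$. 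With $\delta_0,\delta_1$ controlled, Lemma \ref{lem:squash} produces $\sigma_{i+1}$ together with all the estimates we need.

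Item (c) is the quadratic bi-Lipschitz bound of Lemma \ref{lem:squash}(2), $\lip_{i+1} \le 1 + C(\delta_0^2+\delta_1^2)$, which is already of the claimed form (with $6r_{i-1}$ absorbing the larger radius coming from the $\delta_0^2$ term). Item (a) follows from $|\sigma_{i+1}(z)-z| \le Cr_{i+1}(\delta_0+\delta_1)$ together with the smallness of $\delta_0+\delta_1$ inherited from the induction, once $\rho$ is chosen small enough at the outset to absorb the constants. Item (d) is Lemma \ref{lem:squash}(1) in its sharper form $\|g_1\|_{C^1_r} \le C(n)\delta_1$ valid on the inner ball $\theta \B_{r_{i+1}}$, transported by Lemma \ref{lem:changing-planes} so as to use $V(y,\kappa r_{i+1})$ as the reference plane on all of $2\B_{r_{i+1}}(y)$. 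Item (b) is then a routine area-of-graph computation: a $C^1$-small graph over a $k$-plane has $k$-area at most $(1+o(1))\omega_k(5r_{i+1})^k$, and the graph representation on $2\B_{r_{i+1}}(y)$ given by (d) extends to $5\B_{r_{i+1}}(y)$ by a cover with neighbouring good balls of the same scale, together with the fact that outside $\bigcup 4\good^{i+1}$ the map $\sigma_{i+1}$ is the identity so $T_{i+1}$ inherits its graph structure from $T_i$ there.

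The principal technical obstacle is the exponent bookkeeping in the implication $\delta_1^q \lesssim M^{-1}\beta_q^q \Longrightarrow \delta_1^2 \lesssim M^{-(q+2)/q}\delta_q^2$: this requires applying (\ref{eq:beta-pointwise-est}) on a slightly enlarged scale, which is why (c) refers to $\delta_q^2(y,6r_{i-1})$ rather than to the scale $r_{i+1}$ on which $\sigma_{i+1}$ acts. A secondary subtlety is verifying the hypothesis $d(y,V(y^\ast,\kappa r_i))\le \rho/2$ needed to invoke Lemma \ref{lem:tilt-excess}; this follows from (a) at level $i$, the containment $y\in \bigcup \good^i \setminus R^{\le i}$, and the graph structure of $T_i$ near $y^\ast$ provided by the inductive (d).
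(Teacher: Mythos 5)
Your proposal follows essentially the same path as the paper: use the inductive graph representation of $T_i$, feed it into the squash lemma with planes compared via Lemma \ref{lem:tilt-excess} and the averaged estimate \eqref{eq:beta-pointwise-est}, and read off (a)--(d) from its output. The one structural difference is that you apply Lemma \ref{lem:changing-planes} to $T_i$ \emph{before} the squash lemma (so as to work directly over $V(y,\kappa r_{i+1})$), whereas the paper keeps $V(z,\kappa r_i)$ as the reference plane through the squash lemma and only changes to $V(y,\kappa r_{i+1})$ afterward, when extracting (d). Both orderings work, and once you have changed planes at the start, your second invocation of Lemma \ref{lem:changing-planes} in the derivation of (d) is redundant: the sharper bound $\|g_1\|_{C^1_r}\le C\delta_1$ on $\theta\B_{r_{i+1}}(y)$ with $\theta\approx 3$ already gives the graph representation over $V(y,\kappa r_{i+1})$ on $2\B_{r_{i+1}}(y)$.

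There is one point that needs correcting. You attribute the smallness of $\delta_0+\delta_1$ (needed to invoke the squash lemma and to absorb constants in (a)) to "choosing $\rho$ small enough at the outset." This is not where the smallness comes from: $\rho$ is fixed by Lemma \ref{lem:concentration} once $\tau(n)$ is chosen, and the estimate produced is $\delta_0^2+\delta_1^2 \le CM^{-\frac{q+2}{q}}J$, so the quantity is made small only by choosing $M\ge C J^{\frac{q}{q+2}}$. This is exactly the threshold that reappears in the final balance of constants, so misattributing it obscures why the bound $M = C(n)\max(1,J^{q/(q+2)})$ emerges. A smaller remark: only \eqref{eq:beta-pointwise-est} is used to pass from $\beta_q^2$ to $M^{-1}\delta_q^2$; the estimate \eqref{eq:beta-nonlinear-est} for $\beta_q^q$ is used in the Markov-inequality bound on the excess set, not here.
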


\begin{proof}
In order to derive these, we apply the squash lemma (Lemma \ref{lem:squash}) for a ball $\B_{r_{i+1}(y)} \in \good^{i+1}$. Its center $y$ lies in some $\B_{r_i}(z) \in \good^i$; we let $V := V(z,\kappa r_i)$ be the reference plane. Consider any $y' \in \sC \good^{i+1}$ such that $|y-y'| \le 5r_{i+1}$. Then $y'$ lies in $\B_{2r_i}(z)$ and we may apply Lemma \ref{lem:tilt-excess} (with $2^{-k} \tau$ instead of $\tau$) and obtain 
\begin{align*}
d_{y',r_{i+1}}^2(V(z,2\kappa r_i), V(y',\kappa r_{i+1})) 
& \le C(n,q,\rho,\tau) M^{-\frac{2}{q}} \left( \beta_q^2(y',\kappa r_{i+1}) + \beta_q^2(z,2\kappa r_i) \right) \\
& \le C M^{-\frac{q+2}{q}} \left( \delta_q^2(y',2\kappa r_{i+1}) + \delta_q^2(z,4\kappa r_i) \right) \\
& \le C M^{-\frac{q+2}{q}} \delta_q^2(y,5r_i) \\
& \le C M^{-\frac{q+2}{q}} J 
\end{align*}
Here we used again the pointwise estimate \eqref{eq:beta-pointwise-est} and a bad estimate $\delta_q^2(x,r) \le J$ (the latter shall be refined in the next subsection). We can choose $M \ge C(\tau) J^{\frac{q}{q+2}}$ large enough so that the right-hand side is small. The planes $V(z,2\kappa r_i)$ and $V(z,\kappa r_i)$ are compared in the same way: 
\[
d_{z,r_i}^2(V(z,2\kappa r_i), V(z,\kappa r_i)) 
\le C M^{-\frac{q+2}{q}} \delta^2(y,5r_i) 
\le C M^{-\frac{q+2}{q}} J. 
\]
By the inductive assumption, $T_i$ is a graph over $V(z,\kappa r_i)$ on $2 \B_{r_i}(z)$ hence we can apply Lemma \ref{lem:squash} with 
\[
\delta_1 := \left ( C M^{-\frac{q+2}{q}} \delta_q^2(y,5r_i) \right )^{1/2}, 
\quad
\delta_0 := \left ( C M^{-\frac{q+2}{q}} \delta_q^2(z,5r_{i-1}) \right )^{1/2}. 
\]
Thus we obtain (a) and (b), while (c) follows after an additional estimate on $\delta_0, \delta_1$. 

\medskip

We also obtain an altered version of (d): $T_{i+1}$ is also a graph over $V(z,\kappa r_i)$ on $\theta \B_{r_{i+1}}(y)$ with the desired $C^1$ bound (one can take $\theta = 2.5$). By an application of Lemma \ref{lem:changing-planes}, one can change the plane: $T_{i+1}$ is a graph over $V(y,\kappa r_{i+1})$ on $2 \B_{r_{i+1}}(y)$. This completes the proof of Proposition \ref{prop:surface-prop}. 
\end{proof}

\subsection*{Estimates on the approximating surfaces $T_i$}

By combining the bound for the bi-Lipschitz constant of $\sigma_{i+1} \colon T_i \to T_{i+1}$ in Proposition \ref{prop:surface-prop}c with the elementary estimate $(1+x)^k \le 1+k 2^{k-1} x$ (valid for $x \in [0,1]$), we obtain 
\[
\lip_{i+1}^k \le 1+ C M^{-\frac{q+2}{q}} \sum_s \delta^2(y_s,6r_{i-1}) \chi_{5\B_{r_{i+1}}(y_s)},
\]
where the sum is taken over all balls in $\good^{i+1}$. The measure of $T_{i+1} = \sigma_{i+1}(T_i)$ can be estimated by 
\[ |T_{i+1}| \le \int_{T_i} \lip_{i+1}^k(x) \dd \lambda^k(x). \]
Applying the above estimate and Proposition \ref{prop:surface-prop}b, 
\begin{align*}
|T_{i+1}| 
& \le |T_i| + C M^{-\frac{q+2}{q}} \sum_s |T_i \cap 5 \B_{r_{i+1}}| \delta_q^2(y_s,6r_{i-1}) \\
& \le |T_i| + C M^{-\frac{q+2}{q}} \sum_s \int_{\B_{6r_{i-1}}(y_s)} \beta_q^2(z,6r_{i-1}) \dd \mu(z) \\
& \le |T_i| + C M^{-\frac{q+2}{q}} \int_{\B_2} \beta_q^2(z,6r_{i-1}) \dd \mu(z). 
\end{align*}
In the last line we used the fact that any point $z \in \B_2$ belongs to at most $C(n,\rho)$ balls $\B_{6r_{i-1}}(y_s)$, as the balls $\frac 12 \good^{i+1}$ are disjoint. 

Applying this inductively, we arrive at the following bound: 
\begin{align}
\label{eq:key-surface}
|T_i| & \le |T_0| + C M^{-\frac{q+2}{q}} \sum_{l=0}^{i-1} \int_{\B_2} \beta_q^2(z,6r_{l}) \dd \mu(z) \nonumber \\
& \le \omega_k \left( 1 + C_2(n,q,\rho,\tau) M^{-\frac{q+2}{q}} J \right). 
\end{align}
Here, the bound on the series follows from Remark \ref{rem:flatness-sum}, and equality $|T_0| = \omega_k$ comes from the fact that $T_0$ is a plane. 

\subsection*{Comparison of $\mu$ and $\lambda^k \llcorner T_i$}

Let $\B \in \bad^{i+1} \cup \fin^{i+1}$ be bad or final. In either case, its center $y$ lies in some $\B(z,r_i) \in \good^i$ and $d(y,V(z,\kappa r_i)) \le r_{i+1}/4$, so $T_i$ is a graph over $V(z,\kappa r_i)$ on $\B$. In particular,  
\[ |T_i \cap \B/3| \ge \frac{1}{10} (r_{i+1}/3)^k. \]
Since $|\sigma_{i+1}(y) - y| \le \frac{1}{10} r_{i+1}$ and $\sigma_{i+1}$ has a bi-Lipschitz constant $\lip_{i+1} \le 2^{1/k}$ due to Proposition \ref{prop:surface-prop}, we have 
\begin{align*}
|T_{i+1} \cap \B/2| 
& \ge |T_i \cap \B/3| \cdot \lip_{i+1}^{-k} \\
& \ge 20^{-1} 3^{-k}  r_{i+1}^k.
\end{align*}
By construction, the centers $\sC \good^{>i+1}$ lie outside $\B$, hence $\B/2$ is disjoint with $5 \good^{>i+1}$ and $\sigma_{s} = \id$ on $\B/2$ for $s>i+1$. Therefore 
\[ |T_s \cap \B/2| \ge 20^{-1} 3^{-k} r_{i+1}^k \]
for $s=i,i+1,\ldots$. By definition, $\mu(\B) \le \tau M r_{i+1}^k$ if $\B$ is bad. We choose $M \ge \omega_k/\tau$, so that the same holds if $\B$ is final. Thus we obtain the following comparison estimate 
\begin{equation}
\label{eq:key-comparison}
\mu(\B) \le C_1 \tau M |T_s \cap \B/2| 
\end{equation}
for $\B \in \bad^{i+1} \cup \fin^{i+1}$ and $s=i,i+1,\ldots$. It is essential that the constant $C_1 = 20 \cdot 3^k$ does not depend on $\rho, \tau$. 

\subsection*{Estimates on the excess set}

Since 
\[ E(y,r_i) = \{ x \in \B_{r_i}(y) : d(x,V(y,\kappa r_i)) \ge r_{i+1}/4 \}, \]
Markov inequality yields 
\begin{align*}
\mu(E(y,r_i)) 
& \le \frac{1}{(r_{i+1}/4)^q} \int_{\B_{r_i}(y)} d^q(x,V(y,\kappa r_i)) \dd \mu \\
& \le C(n,q,\rho) r_i^k \beta_q^q(y,\kappa r_i) \\
& \le C(n,q,\rho,\tau) M^{-\frac{q}{2}} J^{\frac{q-2}{2}} r_i^k \delta_q^2(x,2r)
\end{align*}
where in the last line we applied the estimate \eqref{eq:beta-nonlinear-est}. By construction, the balls $\frac{1}{2} \good^i$ are disjoint, hence any point $x \in \R^n$ belongs to at most $C(n)$ of the balls $2 \good^i$. Thus 
\[ \mu(E^{i}) \le C(n,q,\rho,\tau) M^{-\frac{q}{2}} J^{\frac{q-2}{2}} \int_{\B_2} \beta_q^2(x,2r_i) \dd \mu \]
and by summing over $i=0,1,\ldots,A$ we obtain the bound 
\begin{equation}
\label{eq:key-excess-set}
\mu(E^{\le A}) \le C_3(n,q,\rho,\tau) M^{-\frac{q}{2}} J^{\frac{q}{2}}. 
\end{equation}
Here we used again the assumption \eqref{eq:mu-flatness} together with Remark \ref{rem:flatness-sum}. 

\subsection*{Derivation of the bound}
\label{ch:final-derivation}

Here we prove Claim \ref{cl:weak-estimate} using the estimates \eqref{eq:key-excess-set}, \eqref{eq:key-surface}, \eqref{eq:key-comparison}. By construction, the balls $\good^i$ are disjoint from $\sC \sS^{\le i}$. This means that at the $A$-th step of the construction we have $\good^A = \emptyset$, as this collection of balls is disjoint with $\sC \sS$. Therefore $\mu$ is supported in the remainder set: 
\[ \supp \mu \subseteq \bigcup \bad^{\le A} \cup \bigcup \fin^{\le A} \cup E^{\le A}. \]
Recall that the collections $\frac 12 \bad^{\le A}$, $\frac 12 \fin^{\le A}$ are disjoint, so we can use \eqref{eq:key-comparison} for all bad and final balls with $s=A$ to obtain: 
\[ \mu \left( \bigcup \bad^{\le A} \cup \bigcup \fin^{\le A} \right) \le C_1 \tau M |T_A|. \]
Then the surface estimate \eqref{eq:key-surface} yields 
\[ \mu \left( \bad^{\le A} \cup \bigcup \fin^{\le A} \right) \le \omega_k C_1 \tau M (1 + C_2 M^{-\frac{q+2}{q}} J). \]
We add it with the estimate for the excess set \eqref{eq:key-excess-set} and arrive at 
\[ \mu(\B_1) \le M \left( \omega_k C_1 \tau (1 + C_2 M^{-\frac{q+2}{q}} J) + C_3 M^{-\frac{q+2}{2}} J^{\frac{q}{2}} \right). \]
Note that $\tau(n) = 80^{-1} 6^{-n}$ is chosen so that $\omega_k C_1 \tau \le 1/4$. Now we choose the smallest $M$ satisfying 
\[ C_2 M^{-\frac{q+2}{q}} J \le 1, \quad C_3 M^{-\frac{q+2}{2}} J^{\frac{q}{2}} \le \frac 12 \]
and other lower bounds of the form $M \ge C(n,q)$ imposed during the proof; since $\tau(n)$ is fixed, we see that $M = C(n) \cdot \max \left( 1,J^{\frac{q}{q+2}} \right)$. Finally, we are able to estimate 
\[ \mu(\B_1) \le M \left( \frac 14 (1+1) + \frac{1}{2} \right) = M. \]
This ends the proof of Claim \ref{cl:weak-estimate} and Theorem \ref{th:discrete-R}. 

\section{Extentions of the theorem}
\label{ch:extensions}

\subsection*{Generalization to non-discrete measures}

We assume that $S \subseteq \B_2$ is a $\lambda^k$-measurable subset. Here we generalize Theorem \ref{th:discrete-R} to measures of the form $\mu = \lambda^k \llcorner S$, i.e. we show that \eqref{eq:mu-flatness} implies \eqref{eq:mu-bound} in this case as well. This was done as a part of an independent theorem in \cite[Th.~3.3]{NaVa17}, but here we show it is a corollary of Theorem \ref{th:discrete-R}. 

\begin{tw}
\label{th:non-discrete}
Let $S \subseteq \B_2$ be a $\lambda^k$-measurable set and let $\beta_q(x,r)$, $J_q(x,r)$ be defined as in \eqref{eq:betaq}, \eqref{eq:Jones-square} corresponding to the measure $\lambda^k \llcorner S$, where $2 \le q < \infty$. Assume that for each ball $\B_r(x) \subseteq \B_2$ we have 
\begin{equation*}
r^{-k} \int_{S \cap \B_r(x)} J_q(y,r) \dd \lambda^k(y) \le J. 
\end{equation*}
Then for each ball $\B_r(x) \subseteq \B_1$ the following estimate holds: 
\begin{equation*}
\lambda^k(S \cap \B_r(x)) \le C(n,q) \cdot \max \left( 1,J^{\frac{q}{q+2}} \right) \cdot r^k. 
\end{equation*}
\end{tw}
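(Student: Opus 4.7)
My plan is to reduce Theorem \ref{th:non-discrete} to Theorem \ref{th:discrete-R} by approximating $\mu := \lambda^k \llcorner S$ with a discrete measure $\mu^\ast$ adapted to the local $k$-density of $S$. Since the set of points of $S$ with zero upper $k$-density has $\lambda^k$-measure zero by standard density theory, one may assume every $x \in S$ has positive
\[
r(x) := \sup \{ r \in (0, 1] : \lambda^k(\B_r(x) \cap S) \ge \theta \omega_k r^k \}
\]
for a fixed threshold $\theta \in (0, 1)$. Apply the Vitali $5r$-covering lemma to the family $\{ \B_{r(x)}(x) : x \in S \}$ to obtain a countable disjoint subfamily $\sS^\ast = \{ \B_{r_j}(x_j) \}$ whose $5$-dilations cover $S$, and set $\mu^\ast := \sum_j \omega_k r_j^k \delta_{x_j}$.

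The supremum definition of $r(x_j)$ yields a two-sided comparison of $\mu^\ast$ with $\mu$. On one hand, $\lambda^k(\B_{5 r_j}(x_j) \cap S) \le 5^k \theta \omega_k r_j^k$ (as $5 r_j > r(x_j) = r_j$), which summed over the $5$-cover gives
\[
\lambda^k(S \cap \B_r(x)) \le 5^k \theta \cdot \mu^\ast(\B_{r+5}(x)),
\]
reducing the theorem to the analogous bound on $\mu^\ast$. On the other hand, $\omega_k r_j^k \le \theta^{-1} \lambda^k(\B_{r_j}(x_j) \cap S)$ at the scale $r_j$ itself; combined with disjointness of $\sS^\ast$ and the elementary fact that any two distinct atoms of $\mu^\ast$ in $\B_s(x_j)$ must have radii strictly less than $s$, this yields the a priori density estimate $\mu^\ast(\B_s(x_j)) \le 2^k \omega_k s^k$ for every $s \ge r_j$.

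The crux is verifying the hypothesis \eqref{eq:mu-flatness} for $\mu^\ast$ with constant $C(n, q, \theta) J$. Fixing an atom $x_j$ and a scale $s \ge r_j$, one tests $\beta_{\mu^\ast, q}^q(x_j, s)$ against the best $L^q$-plane $V^\ast := V_\mu(x_j, 2s)$ of $\mu$. Splitting atoms $x_l \in \B_s(x_j)$ according to whether $d(x_l, V^\ast) \ge 2 r_l$ or $< 2 r_l$: in the first case the density lower bound at scale $r_l$ and disjointness of $\sS^\ast$ give
\[
\sum_l \omega_k r_l^k d^q(x_l, V^\ast) \le C \theta^{-1} \int_{\B_{2s}(x_j) \cap S} d^q(z, V^\ast) \dd \lambda^k \le C \theta^{-1} (2s)^{k+q} \beta_{\mu, q}^q(x_j, 2s);
\]
the close-to-plane atoms contribute a residual bounded crudely by $\omega_k r_l^{k+q}$ per atom, controlled scale by scale via the a priori density estimate together with a slab-volume count. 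Integrating the resulting pointwise inequality for $\beta_{\mu^\ast, q}^2$ in $s$ and invoking Remark \ref{rem:flatness-sum}, one obtains \eqref{eq:mu-flatness} for $\mu^\ast$. Theorem \ref{th:discrete-R} then gives $\mu^\ast(\B_r(x)) \le C(n, q) \max(1, J^{q/(q+2)}) r^k$ on every $\B_r(x) \subseteq \B_1$ by the usual rescaling, and combining with the one-sided comparison above completes the proof.

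The main obstacle is the residual contribution in the $\beta$-comparison. A naive slab-volume count on the close-to-plane atoms gives a residual of order unity in $\beta_{\mu^\ast, q}^q(x_j, s)$ at each scale, which a priori threatens a logarithmic divergence in $J_{\mu^\ast, q}(x_j, r)$ as $r_j \to 0$. Its control requires a careful dyadic bookkeeping over the sizes $r_l$ (exploiting the geometric decay $(r_l/s)^q$ within each dyadic bin), together with the a priori density estimate on $\mu^\ast$ from the second paragraph; the key point is that this density estimate is derived purely from the Vitali construction and not from Theorem \ref{th:discrete-R}, so no circularity arises.
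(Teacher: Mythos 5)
Your overall strategy --- restrict to density points, run a Vitali covering, replace $\mu=\lambda^k\llcorner S$ by a discrete surrogate, verify \eqref{eq:mu-flatness} for it and invoke Theorem \ref{th:discrete-R} --- is exactly the architecture of the paper's proof. But the step you yourself call the crux, namely \eqref{eq:mu-flatness} for $\mu^\ast$, is not established, and the device you propose for it does not suffice. Because you place the atom at the Vitali center $x_j\in S$, the quantity $d^q(x_j,V^\ast)$ is not controlled by the $L^q$-average of $d^q(\cdot,V^\ast)$ over $S\cap\B_{r_j}(x_j)$: a single point of $S$ (of $\lambda^k$-measure zero) can sit at height comparable to $r_j$ above the plane while essentially all the nearby mass of $S$ lies on it. This is the source of your additive residual $\sum_l r_l^{k+q}$, which is genuinely independent of $\beta_{\mu,q}$. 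The ``dyadic bookkeeping exploiting the decay $(r_l/s)^q$'' is precisely the naive bin count: atoms with $r_l\in(2^{-m-1}s,2^{-m}s]$ number at most $C2^{mk}$ by your a priori density estimate, so bin $m$ contributes at most $Cs^{k+q}2^{-mq}$ and the sum over $m$ is of order $s^{k+q}$, i.e.\ an $O(1)$ residual in $\beta_{\mu^\ast,q}^q(x_j,s)$ at every scale $s\in[r_j,r]$. Integrated against $\dd s/s$ this is exactly the $\log(r/r_j)$ divergence you acknowledge; nothing in your outline excludes cascades of comparable-size atoms lying near $V^\ast$ at all scales, nor shows that the total mass of atoms seeing such cascades is negligible. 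So the key comparison lemma remains unproven, and as written you only get \eqref{eq:mu-flatness} for $\mu^\ast$ with a constant growing like $J$ plus the number of scales, which is not enough.

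The paper removes this obstruction in the construction rather than in the bookkeeping: the atom attached to the Vitali ball $\B_j$ is the \emph{center of mass} $p_j$ of $\mu\llcorner\frac{1}{10}\B_j$, not the center $x_j$. Convexity of $d^q(\cdot,V)$ and Jensen's inequality give $d^q(p_j,V)\le\dashint_{\frac{1}{10}\B_j}d^q(y,V)\dd\mu$ for every plane $V$, and the lower mass bound turns this into $\omega_k(r_j/10)^k d^q(p_j,V)\le C\int_{\frac{1}{10}\B_j}d^q(y,V)\dd\mu$, whence $\beta_{\nu,q}^q(x,s)\le C\beta_{\mu,q}^q(x,3s)$ with no residual at all; \eqref{eq:mu-flatness} for the surrogate then holds with constant $CJ$ and no dyadic analysis is needed. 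I recommend adopting this. Two smaller points: (i) before invoking density theory you must reduce to $\lambda^k(S)<\infty$ (the paper gets $\sigma$-finiteness from Markov's inequality applied to $\int_{\B_2} J_q\dd\mu\le 2^kJ$); (ii) your upper bound $\lambda^k(S\cap\B_{5r_j}(x_j))\le 5^k\theta\omega_k r_j^k$ fails once $5r_j$ exceeds the cap of your supremum, and allowing radii up to $1$ forces you to bound $\mu^\ast$ on $\B_{r+5}(x)$, where neither the hypothesis nor Theorem \ref{th:discrete-R} applies directly; capping the radii at a small $\rho$, as the paper does, and finishing with a rescaling and covering argument fixes both.
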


\begin{proof}
It is sufficient to show the claim for the ball $\B_1$. Then for any $\B_r(x) \subseteq \B_1$ we can apply the theorem to the scaled set $S' = \frac 1r(S-x)$, which satisfies the assumptions with the same value of $J$. Thus we obtain 
\[
\lambda^k(S \cap \B_r(x)) = \lambda^k(S' \cap \B_1) \cdot r^k \le C(n) \cdot \max \left( 1,J^{\frac{q}{q+2}} \right) \cdot r^k. 
\]

As a first step we show that $\mu = \lambda^k \llcorner S$ is $\sigma$-finite. Indeed, \eqref{eq:mu-flatness} yields in particular 
\[ \int_{\B_2} J_q(y,2) \dd \mu(y) \le 2^k \cdot J. \]
Choose $t>0$ and define the superlevel set $S_t = \{ y \in S : J_q(y) \ge t J \}$, then $\mu(S_a) \le 2^k /t$ by Markov inequality. On the other hand, the set $S_0 = \{ J_q(y) = 0 \}$ is clearly contained in a $k$-dimensional plane and hence $\mu(S_0) < \infty$. Since 
\[ S = S_0 \cup \bigcup_{j=1}^\infty S_{1/j}, \]
$\mu$ is $\sigma$-finite. We can assume without loss of generality that $\mu$ is finite. Indeed, we can first consider the smaller sets $S_0 \cup S_{1/j}$ instead; since the bound \eqref{eq:mu-bound} depends on $n$ and $q$ only, in the limit we obtain the bound also for $S$. 

Second, we recall the notion of upper $k$-dimensional density 
\[ \Theta^{*k}(S,x) = \limsup_{r \to 0} \frac{\lambda^k(S \cap \B_r(x))}{\omega_k r^k} \]
and its following property \cite{Ma95}: 

\begin{stw}
\label{prop:controlled-density}
Let $S \subseteq \R^n$ be a set with $\lambda^k(S) < \infty$. Then for $\lambda^k$-a.e. $x \in S$, 
\begin{equation}
\label{eq:controlled-density}
2^{-k} \le \Theta^{*k}(S,x) \le 1.
\end{equation}
\end{stw}

Consider the set $S^\star$ of all points $x \in S$ satisfying \eqref{eq:controlled-density}. We can replace $S$ with this possibly smaller set. Since the difference $S \setminus S^\star$ has zero $\lambda^k$ measure, the obtained bound for $S^\star$ holds also for $S$. From now on we assume that all points $x \in S$ satisfy \eqref{eq:controlled-density}. 

For every $x \in S$ choose a radius $r_x \in (0,\rho]$ such that 
\begin{alignat*}{3}
\mu \left ( \frac{1}{10} \B_{r_x}(x) \right ) & \ge 2^{-k-1} \omega_k (r_x/10)^k, & \\
\mu(\B_r(x)) & \le 2 \omega_k r^k & \text{ for all } r \le r_x. 
\end{alignat*}
The set $S$ is covered by balls $\B_{r_x}(x)$ and we can extract a countable Vitali subcovering $\B_j = \B_{r_j}(x_j)$, so that the balls $\frac 15 \B_j$ are disjoint. Choose $p_j$ to be the center of mass of $\frac{1}{10} \B_j$ and define the collection 
\[ \sS := \{ \B_{r_j/10}(p_j) \}. \]
Since $p_j \in \frac{1}{10} \B_j$, we have $\B_{r_j/10}(p_j) \subseteq \frac 15 \B_j$, thus the collection $\sS$ is disjoint. We consider the associated measure 
\[ \nu := \sum_j \omega_k (r_j/10)^k \delta_{p_j}. \]

Our goal now is to reduce the problem for $\mu$ to the already solved problem for the discrete measure $\nu$. We will show that this is possible due to the following comparison estimates: 
\begin{align*}
\mu(\B_1) & \le 2 \cdot 10^k \nu(\B_{1+2\rho}) \\
\beta_{\nu,q}^q(x,s) & \le 2^{k+1} 3^{k+q} \beta_{\mu,q}^q(x,3s).
\end{align*}

For the first estimate, we observe that 
\[ \mu(\B_1) \le \sum_{x_j \in \B_{1+\rho}} \mu(\B_j) \le 2 \cdot 10^k \sum_{x_j \in \B_{1+\rho}} \omega_k (r_j/10)^k \le 2 \cdot 10^k \nu(\B_{1+2\rho}). \]

As for the second, consider a ball $\B_s(x)$ such that $3\B_s(x) \subseteq \B_2$. If there is some $p_j \in \B_s(x)$ with $r_j/10 > 2 s$, then by disjointness of $\sS$ this is the only point from $\supp \nu$ in $\B_s(x)$ and $\beta_{\nu,2}(x,s) = 0$. In the other case, $r_j/10 \le 2 s$ for all $p_j \in \B_s(x)$. Choose an affine $k$-plane $V$. On each $\frac{1}{10}\B_j$ we apply Jensen's inequality for the function $d^q(\cdot,V)$: 
\[ d^q(p_j,V) \le \dashint_{\frac{1}{10} \B_j} d^q(y,V) \dd \mu. \]
This yields 
\[ (r_j/10)^k \omega_k d^q(p_j,V) \le 2^{k+1} \int_{\frac{1}{10} \B_j} d^q(y,V) \dd \mu \]
and hence 
\[ \int_{\B_s(x)} d^q(y,V) \dd \nu 
\le 2^{k+1} \sum_{p_j \in \B_s(x)} \int_{\frac{1}{10} \B_j} d^q(y,V) \dd \mu 
\le 2^{k+1} \int_{\B_{3s}(x)} d^q(y,V) \dd \mu. \]
Taking the infimum on the right-hand side, 
\[ \beta_{\nu,q}^q(x,s) \le 2^{k+1} 3^{k+q} \beta_{\mu,q}^q(x,3s). \]

Therefore the flatness condition \eqref{eq:mu-flatness} is satisfied also for the measure $\nu$ and we obtain our claim by an application of Theorem \ref{th:discrete-R}. To be more precise, one first needs to apply an easy rescaling and covering argument, as one needs to bound $\nu(\B_{1+2\rho})$ instead of $\nu(\B_1)$, and also the obtained estimate works only for balls $\B_s(x)$ such that $3 \B_s(x) \subseteq \B_2$. 
\end{proof}

\begin{rem}
\label{rem:non-discrete}
This proof shows that Theorem \ref{th:discrete-R} actually works of all measures $\mu$ with the covering property resulting from Proposition \ref{prop:controlled-density}. Consider $\mu$ supported in the union of balls $\B_{r_j}(x_j)$, each satisfying 
\begin{alignat*}{3}
\mu \left ( \frac{1}{10} \B_{r_j}(x_j) \right ) & \ge c_\mu (r_j/10)^k, & \nonumber \\
\mu \left ( \B_r(x_j) \right ) & \le C_\mu r_j^k & \text{ for all } r \le r_j. 
\end{alignat*}
In particular, this is satisfied by any $\mu$ such that 
\[
c_\mu \le \Theta^{*k}(\mu,x) \le C_\mu \quad \text{for } \mu-\text{a.e. } x.
\]

If $\mu$ satisfies the assumption \eqref{eq:mu-flatness}, then $\mu(\B_1)$ is bounded as in \eqref{eq:mu-bound}. Naturally, the constant obtained in the final estimate depends on $c_\mu,C_\mu$. 
\end{rem}

\subsection*{Weakened assumptions}

The proof of Theorem \ref{th:discrete-R} applies also with the assumption \eqref{eq:mu-flatness} replaced by $\dashint_\B J_2 \le J$. This means that we consider the integral divided by $\mu(\B_r(x))$ instead of $r^k$. Since there is no a priori upper bound for $\mu$, this assumption is weaker. 

\begin{tw}
\label{th:weakened}
Let $\sS = \{ \B_{r_j}(x_j) \}$ be a collection of disjoint balls in $\B_2$, $\mu = \sum_j \omega_k r_j^k \delta_{x_j}$ be its associated measure and let $\beta_q(x,r)$, $J_q(x,r)$ be defined as in \eqref{eq:betaq}, \eqref{eq:Jones-square}, where $2 \le q < \infty$. Assume that for each ball $\B_r(x) \subseteq \B_2$ we have 
\begin{equation*}
\dashint_{\B_r(x)} J_q(y,r) \dd \mu(y) \le J. 
\end{equation*}
the following estimate holds: 
\begin{equation*}
\mu(\B_1) = \sum_{x_j \in B_1} \omega_k r_j^k \le C(n,q) \cdot \max \left( 1, J^{\frac{q}{2}} \right). 
\end{equation*}
\end{tw}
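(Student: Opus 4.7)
The plan is to derive Theorem \ref{th:weakened} directly from Theorem \ref{th:discrete-R} via a bootstrap on the a priori density. The key observation is that if $\mu(\B_r(x)) \le K r^k$ for every $\B_r(x) \subseteq \B_2$, then the averaged hypothesis automatically upgrades to the hypothesis \eqref{eq:mu-flatness} of Theorem \ref{th:discrete-R} with $KJ$ in place of $J$, since
\begin{equation*}
r^{-k} \int_{\B_r(x)} J_q(y,r) \dd \mu(y) = \frac{\mu(\B_r(x))}{r^k} \cdot \dashint_{\B_r(x)} J_q \dd \mu \le KJ.
\end{equation*}
Feeding the conclusion of Theorem \ref{th:discrete-R} back into the definition of $K$ will then constrain $K$ itself.

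I would first reduce to the case $\sS$ finite and $\supp \mu \subseteq \B_1$, exactly as at the start of the proof of Theorem \ref{th:discrete-R}; in that finite setting the quantity $K := \sup\{\mu(\B_r(x))/r^k : \B_r(x) \subseteq \B_2\}$ is finite. Applying Theorem \ref{th:discrete-R} with the upgraded constant $KJ$ gives $\mu(\B_1) \le C(n,q) \max(1, (KJ)^{q/(q+2)})$. For any $\B_r(x)$ with $\B_{2r}(x) \subseteq \B_2$, the rescaled measure $\mu_{x,r}(\cdot) := r^{-k}\mu(x + r\,\cdot)$ inherits the upgraded strong hypothesis on $\B_2$ with the same constant $KJ$ (by a direct calculation using scale invariance of $J_q$), so Theorem \ref{th:discrete-R} applied to $\mu_{x,r}$ yields the identical bound on $\mu(\B_r(x))/r^k$. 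The remaining balls (those with $r$ close to $1$, or $x$ close to $\partial \B_2$) are handled by using $\supp \mu \subseteq \B_1$ and direct comparison with $\mu(\B_1)$. Taking the supremum yields
\begin{equation*}
K \le C(n,q) \max(1, (KJ)^{q/(q+2)}).
\end{equation*}

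Solving this self-consistent inequality: either $KJ \le 1$, giving $K \le C(n,q)$, or $K \le C(KJ)^{q/(q+2)}$, which rearranges as $K^{2/(q+2)} \le CJ^{q/(q+2)}$ and thus $K \le C(n,q) \cdot J^{q/2}$. In either case $K \le C(n,q) \max(1, J^{q/2})$, and since $\mu(\B_1) \le K$ the conclusion follows.

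I expect the scaling step to be the main obstacle: to close the bootstrap one must pass from the bound on $\mu(\B_1)$ supplied by Theorem \ref{th:discrete-R} to bounds on $\mu(\B_r(x))/r^k$ at every scale. This is routine (scale invariance of the hypothesis together with the support condition on $\mu$ takes care of the boundary cases) but requires some bookkeeping to avoid inflating the constants. Once it is done, the appearance of the exponent $q/2$ --- as opposed to $q/(q+2)$ in Theorem \ref{th:discrete-R} --- is purely algebraic, dictated by the fixed-point equation $K \asymp (KJ)^{q/(q+2)}$.
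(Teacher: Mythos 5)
There is a fatal problem at the very first step: your bootstrap quantity $K = \sup\{\mu(\B_r(x))/r^k : \B_r(x)\subseteq\B_2\}$ is infinite for every nonzero measure of the form $\mu = \sum_j \omega_k r_j^k \delta_{x_j}$. Around any atom $x_j$ one has $\mu(\B_r(x_j)) \ge \omega_k r_j^k$ for arbitrarily small $r$, so $\mu(\B_r(x_j))/r^k \to \infty$; reducing to a finite collection $\sS$ makes $\mu(\B_2)$ finite but does nothing for this sup. This is precisely why Claim \ref{cl:weak-estimate} carries the proviso that $\B_{r_j}(x)$ be disjoint from $\sC\sS^{\le j}$ (the paper explicitly remarks that the estimate fails without it). Consequently your upgrade of the averaged hypothesis to \eqref{eq:mu-flatness} with constant $KJ$ is vacuous whenever $J>0$, and the fixed-point inequality $K \le C\max\bigl(1,(KJ)^{q/(q+2)}\bigr)$ carries no information. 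Nor can you repair this by defining $K$ only over the admissible balls of Claim \ref{cl:weak-estimate}: to apply Theorem \ref{th:discrete-R} to the rescaled measure $\mu_{x,r}$ you would need \eqref{eq:mu-flatness} for \emph{all} sub-balls of $\B_{2r}(x)$, most of which are not of that restricted type, so the black-box reduction does not close. A secondary (but real) gap is the claimed reduction to $\supp\mu\subseteq\B_1$: unlike $r^{-k}\int J_q\,\dd\mu$, the averaged quantity $\dashint J_q\,\dd\mu$ is not monotone under restriction of $\mu$, since truncation shrinks the denominator $\mu(\B_r(x))$ as well as the numerator.

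The paper's proof avoids exactly these traps by not using Theorem \ref{th:discrete-R} as a black box. It reruns the induction and notes that the strong hypothesis is only ever needed to bound $\int_{\B_{1.5}} J_q\,\dd\mu$; at the current scale of the induction, the already-established instances of Claim \ref{cl:weak-estimate} at finer scales, combined with a covering of $\B_{1.5}$ by $C(n,\rho)$ balls of radius $\rho$ avoiding $\sC\sS^{\le 1}$, give the rough bound $\mu(\B_{1.5}) \le C(n,\rho)M$, hence $\int_{\B_{1.5}} J_q\,\dd\mu \le C M J$. Feeding this into the analogues of \eqref{eq:key-surface} and \eqref{eq:key-excess-set} yields $|T_A| \le \omega_k(1+C_2 M^{-2/q}J)$ and $\mu(E^{\le A}) \le C_3 J^{q/2}$, and closing the estimate forces $M = C(n)\max\bigl(1,J^{q/2}\bigr)$ --- the same exponent your fixed-point heuristic predicts, but obtained scale by scale inside the induction, where a density bound is actually available.
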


\begin{proof}[Sketch of proof]
Proceeding as in the proof of Theorem \ref{th:discrete-R}, one obtains the following counterparts of estimates \eqref{eq:key-excess-set}, \eqref{eq:key-surface}: 
\begin{align*}
\mu(E^{\le A}) & \le C_3 J^{\frac{q}{2}}, \\
|T_A| & \le \omega_k \left ( 1 + C_2 M^{-\frac{2}{q}} J \right ).
\end{align*}
The main difference lies in the last step of each estimate, where one needs to bound the integral $\int_{\B_2} J_q(x,r) \dd \mu(x)$. A closer look at the proof shows that in fact an integral over $\B_{1.5}$ is sufficient to bound these quantities (actually, any ball larger than $\B_1$ is sufficient if $\rho$ is small enough). In the case considered in Theorem \ref{th:discrete-R}, this is bounded by $J$; in this case, one has to use the rough estimate $\mu(\B_{1.5}) \le C(n) M$ to obtain 
\[
\int_{\B_{1.5}} J_q(x,r) \dd \mu(x) = \mu(\B_{1.5}) \dashint_{\B_{1.5}} J_q(x,r) \dd \mu \le C(n) M J. 
\]

\medskip

This rough estimate can be derived as follows. Since the collection $\sS = \sS^{\ge 1}$ is disjoint, there are at most $C(n,\rho)$ ball centers $\B_{1.5} \cap \sC \sS^{1}$ and each has measure $\omega_k \rho^k$. The rest of $\B_{1.5}$ can be covered by $C(n,\rho)$ balls of radius $\rho$ disjoint from $\sC \sS^{\le 1}$. By the inductive assumption of Claim \ref{cl:weak-estimate}, each has measure bounded by $M \rho^k$. This yields 
\[
\mu(\B_{1.5}) \le C(n,\rho) \omega_k \rho^k + C(n,\rho) M \rho^k \le C(n,\rho) M. 
\]

\medskip

The proof of the estimate \eqref{eq:key-comparison} carries over without changes: 
\[
\mu(\B) \le C_1 \tau M |T_s \cap \B/2| \quad \text{for } \B \in \bad^{i+1} \cup \fin^{i+1} \text{ and } s \ge i. 
\]
Similarly, these three estimates combined yield 
\[ 
\mu(\B_1) \le M \left( \omega_k C_1 \tau \left( 1 + C_2 M^{-\frac{2}{q}} J \right) + C_3 M^{-1} J^{\frac{q}{2}} \right) 
\]
and the proof works for $M = C(n) \cdot \max \left ( 1, J^{\frac{q}{2}} \right )$. 
\end{proof}

\bibliography{discrete-reifenberg}
\bibliographystyle{acm}

\end{document}